\newtheorem{theorem}{Theorem}
\newtheorem{corollary}[theorem]{Corollary}
\newtheorem{lemma}[theorem]{Lemma}
\newtheorem{proposition}[theorem]{Proposition}
\theoremstyle{definition}
\newtheorem{example}[theorem]{Example}
\DeclareMathOperator{\im}{im}
\DeclareMathOperator{\lcm}{lcm}
\DeclareMathOperator{\rad}{rad}
\DeclareMathOperator{\rank}{rank}
\renewcommand{\labelenumi}{\upshape(\roman{enumi})}
\renewcommand{\theenumi}\labelenumi
\title{On sesquilinear forms over finite fields}
\author{Ruikai Chen\\\small School of Mathematical Sciences, South China Normal University, Guangzhou 510631, China\\\small Email: \href{mailto:chen.rk@outlook.com}{chen.rk@outlook.com}}
\date{}
\begin{document}

\maketitle

\begin{abstract} 
We develop a theory of sesquilinear forms over finite fields, investigating their representations via polynomials and coefficient matrices, along with classification results for these forms. Through their connection to quadratic forms, we calculate certain character sums to resolve enumeration problems for equations defined by sesquilinear forms. This provides a characterization of a class of maximal or minimal Artin-Schreier curves with explicit examples.

\textit{Keywords:} Artin-Schreier curve, finite field, matrix, polynomial, sesquilinear form
\end{abstract}

\section{Introduction}

Let $K$ be a field equipped with an automorphism $\pi$ and $V$ be a vector space over $K$. A linear form of $V/K$ is a $K$-linear map from $V$ to $K$, and a semilinear form of $V/K$ is a $K$-semilinear map (with respect to $\pi$) from $V$ to $K$. A sesquilinear form $\sigma$ of $V/K$ is then defined to be a map from $V\times V$ to $K$ such that $\sigma(\cdot,v)$ is a linear form and $\sigma(v,\cdot)$ is a semilinear form of $V/K$ for each fixed $v\in V$. Denote by $\sigma^*$ the sesquilinear form satisfying $\sigma^*(u,v)=\pi(\sigma(v,u))$ for all $u,v\in V$. When $\pi$ is the trivial automorphism, $\sigma$ is just a bilinear form. For our purpose, $\pi$ is always assumed to be a nontrivial involution of $K$, so that $\sigma^{**}=\sigma$. If $\sigma^*=\lambda\sigma$ for some $\lambda\in K$, then $\sigma$ is called a $\lambda$-Hermitian form of $V/K$. The left radical of $\sigma$ is the subspace of $V/K$ consisting of $u\in V$ such that $\sigma(u,v)=0$ for all $v\in V$, denoted by $\rad\sigma$, and the right radical is defined to be $\rad\sigma^*$. If $\sigma(v,v)=0$ for all $v\in V$, then $\sigma$ is called alternating. Two sesquilinear forms $\sigma_0$ and $\sigma_1$ of $V/K$ are called equivalent, if there exists a linear automorphism $\tau$ of $V/K$ such that $\sigma_0(u,v)=\sigma_1(\tau(u),\tau(v))$ for all $u,v\in V$. 

In general, if $V/K$ is finite-dimensional, then sesquilinear forms of $V/K$ are represented by matrices. Given a basis $\beta_1,\dots,\beta_n$ of $V/K$, for $u=\sum_{i=1}^nu_i\beta_i$ and $v=\sum_{i=1}^nv_i\beta_i$ with $u_i,v_i\in\mathbb F_{q^2}$, one has
\[\sigma(u,v)=\sum_{i=1}^n\sum_{j=1}^n\sigma(u_i\beta_i,v_j\beta_j)=\sum_{i=1}^n\sum_{j=1}^nu_i\pi(v_j)\sigma(\beta_i,\beta_j).\]
The $n\times n$ matrix with $(i,j)$ entry $\sigma(\beta_i,\beta_j)$ is thus called the coefficient matrix of $\sigma$ with respect to the basis $\beta_1,\dots,\beta_n$.

We focus on sesquilinear forms over finite fields. Let $K=\mathbb F_{q^2}$, the finite field that admits an involutive automorphism $\pi$ raising each element to its $q$-th power, and let $V=\mathbb F_{q^{2n}}$, as an $n$-dimensional vector space over $\mathbb F_{q^2}$. A distinguishing feature of sesquilinear forms over finite fields, compared to those over general fields, is that they can be represented by polynomials. Let $L$ be a $q^2$-linear polynomial over $\mathbb F_{q^{2n}}$, where the exponent of each term is a power of $q^2$, and let $\mathrm{Tr}$ be the trace map of $\mathbb F_{q^{2n}}/\mathbb F_{q^2}$. Define $\sigma_L$ by
\[\sigma_L(u,v)=\mathrm{Tr}(uL(v^q))\]
for $u,v\in\mathbb F_{q^{2n}}$. This clearly induces a sesquilinear form of $\mathbb F_{q^{2n}}/\mathbb F_{q^2}$.

In the literature, sesquilinear forms over finite fields have received far less attention than symmetric bilinear and quadratic forms. For example, \cite{klapper1997cross} and \cite{fitzgerald2017invariants} classify quadratic forms represented by monomials, while \cite{coulter2002number} studies the number of zeros of such forms. In recent years, quadratic forms inducing maximal or minimal Artin-Schreier curves have attracted significant interest, which can be found in \cite{anbar2014quadratic,ozbudak2016explicit,bartoli2021explicit,oliveira2023artin}. This paper addresses analogous problems for sesquilinear forms. We will first represent sesquilinear forms by polynomials over finite fields and study their classification in a general framework. Afterwards, we investigate equations defined by sesquilinear forms, leading to a characterization of maximal or minimal Artin-Schreier curves derived from these forms.

\section{Sesquilinear forms represented by polynomials and their classification}

Consider the polynomial ring $\mathbb F_{q^{2n}}[x]$, where polynomials viewed as maps on $\mathbb F_{q^{2n}}$ are usually taken modulo $x^{q^{2n}}-x$. A $q^2$-linear polynomial over $\mathbb F_{q^{2n}}$ thereby acts as a linear endomorphism of $\mathbb F_{q^{2n}}/\mathbb F_{q^2}$. Conversely, every linear endomorphism of $\mathbb F_{q^{2n}}/\mathbb F_{q^2}$ can be written as a $q^2$-linear polynomial over $\mathbb F_{q^{2n}}$. In particular, every linear form of $\mathbb F_{q^{2n}}/\mathbb F_{q^2}$ is given by $\mathrm{Tr}(\alpha x)$ for some $\alpha\in\mathbb F_{q^{2n}}$. This correspondence yields canonical polynomial representation for sesquilinear forms of $\mathbb F_{q^{2n}}/\mathbb F_{q^2}$.

\begin{theorem}\label{rep}
Every sesquilinear form of $\mathbb F_{q^{2n}}/\mathbb F_{q^2}$ can be written as $\sigma_L$ for some $q^2$-linear polynomial $L$ over $\mathbb F_{q^{2n}}$. In addition, the following conditions are equivalent:
\begin{enumerate}
\item\label{item0}$\sigma_L$ is alternating;
\item\label{item1}$\sigma_L$ is the zero form;
\item\label{item2}$L(x)\equiv0\pmod{x^{q^{2n}}-x}$.
\end{enumerate}
\end{theorem}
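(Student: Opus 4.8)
The plan is to handle the representation first and then the equivalence, isolating the only substantive implication. For the representation, I would fix $v\in\mathbb F_{q^{2n}}$ and use the quoted fact that every linear form of $\mathbb F_{q^{2n}}/\mathbb F_{q^2}$ equals $\mathrm{Tr}(\alpha\,\cdot)$ for a unique $\alpha$; applied to the linear form $u\mapsto\sigma(u,v)$ this produces a unique $\phi(v)\in\mathbb F_{q^{2n}}$ with $\sigma(u,v)=\mathrm{Tr}(u\phi(v))$. The semilinearity of $\sigma$ in its second slot then transfers, via nondegeneracy of the trace pairing, to the identities $\phi(v+w)=\phi(v)+\phi(w)$ and $\phi(cv)=c^q\phi(v)$ for $c\in\mathbb F_{q^2}$. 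Since $x\mapsto x^q$ is a bijection of $\mathbb F_{q^{2n}}$ with inverse $x\mapsto x^{q^{2n-1}}$, I would define $L$ by $L(x)=\phi(x^{q^{2n-1}})$, so that $L(v^q)=\phi(v)$ and hence $\sigma=\sigma_L$. It remains to check $L$ is $q^2$-linear: additivity is immediate, and for $c\in\mathbb F_{q^2}$ one uses that $2n-1$ is odd, so $c^{q^{2n-1}}=c^q$, whence $L(cx)=\phi(c^qx^{q^{2n-1}})=c^{q^2}\phi(x^{q^{2n-1}})=cL(x)$.

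For the equivalences, the implications (ii)$\Rightarrow$(iii) and (iii)$\Rightarrow$(ii) are routine: $\sigma_L$ vanishes identically iff $\mathrm{Tr}(uL(v^q))=0$ for all $u,v$, which by nondegeneracy of the trace forces $L(v^q)=0$ for every $v$, and since $v\mapsto v^q$ is onto this says $L$ is the zero map, i.e.\ $L(x)\equiv0\pmod{x^{q^{2n}}-x}$; the converse is clear. Likewise (ii)$\Rightarrow$(i) is trivial, as the zero form satisfies $\sigma_L(v,v)=0$.

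The hard part is (i)$\Rightarrow$(ii), and it is exactly here that the nontriviality of $\pi$ is essential, in contrast with the bilinear case where alternating forms need not vanish. I would argue by polarization at the level of the abstract form, so that no properties of $L$ beyond sesquilinearity of $\sigma_L$ are needed. Assuming $\sigma(w,w)=0$ for all $w$, I expand, for arbitrary $u,v$ and $c\in\mathbb F_{q^2}$,
\[0=\sigma(u+cv,u+cv)=c^q\sigma(u,v)+c\sigma(v,u),\]
using sesquilinearity and that the two pure terms vanish. Taking $c=1$ gives $\sigma(v,u)=-\sigma(u,v)$; substituting this back yields $(c^q-c)\sigma(u,v)=0$ for every $c\in\mathbb F_{q^2}$. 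Because $\pi$ is a nontrivial involution there exists $c\in\mathbb F_{q^2}\setminus\mathbb F_q$ with $c^q\neq c$, and for that $c$ the factor $c^q-c$ is invertible, forcing $\sigma(u,v)=0$. As $u,v$ were arbitrary, $\sigma_L$ is the zero form. I would note this computation is uniform in the characteristic, including $q$ even, where the relation from $c=1$ reads $\sigma(u,v)=\sigma(v,u)$ and the same choice of $c$ still gives $c^q+c\neq0$.
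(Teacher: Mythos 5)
Your proof is correct and follows essentially the same route as the paper's: representing $\sigma(\cdot,v)$ as $\mathrm{Tr}(\cdot\,\phi(v))$, converting the semilinear map $\phi$ into a $q^2$-linear polynomial $L$ with $L(v^q)=\phi(v)$ (you do this by precomposing with the inverse Frobenius rather than prescribing $L$ on a basis, a cosmetic difference), and proving (i)$\Rightarrow$(ii) by expanding $\sigma(u+cv,u+cv)=0$ and using that $c\mapsto c^q$ and $c\mapsto c$ are independent over $\mathbb F_{q^2}$. Your explicit two-step extraction (set $c=1$, then pick $c\notin\mathbb F_q$) just spells out what the paper compresses into ``this forces $\sigma(u,v)=\sigma(v,u)=0$.''
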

\begin{proof}
Let $\sigma$ be a sesquilinear form of $\mathbb F_{q^{2n}}/\mathbb F_{q^2}$. For every $v\in\mathbb F_{q^{2n}}$, the polynomial $\sigma(x,v)$ in $\mathbb F_{q^{2n}}[x]$ induces a linear form of $\mathbb F_{q^{2n}}/\mathbb F_{q^2}$, so there exists a map $\varphi$ on $\mathbb F_{q^{2n}}$ such that $\sigma(u,v)=\mathrm{Tr}(u\varphi(v))$ for all $u,v\in\mathbb F_{q^{2n}}$. Take a basis $\beta_1,\dots,\beta_n$ of $\mathbb F_{q^{2n}}/\mathbb F_{q^2}$ and a $q^2$-linear polynomial $L$ such that $L(\beta_i^q)=\varphi(\beta_i)$ for $1\le i\le n$. For $v=\sum_{i=1}^nv_i\beta_i$ with $v_i\in\mathbb F_{q^2}$, we have
\[\sigma(u,v)=\sum_{i=1}^nv_i^q\sigma(u,\beta_i)=\sum_{i=1}^nv_i^q\mathrm{Tr}(u\varphi(\beta_i))=\sum_{i=1}^nv_i^q\mathrm{Tr}(uL(\beta_i^q))=\mathrm{Tr}(uL(v^q)),\]
as asserted.

If $\sigma$ is alternating, then, given $u,v\in\mathbb F_{q^{2n}}$,
\[c^q\sigma(u,v)+c\sigma(v,u)=\sigma(u,u)+\sigma(u,cv)+\sigma(cv,u)+\sigma(cv,cv)=\sigma(u+cv,u+cv)=0\]
for all $c\in\mathbb F_{q^2}$; this forces $\sigma(u,v)=\sigma(v,u)=0$. Since $u$ and $v$ are arbitrary from $\mathbb F_{q^{2n}}$, it follows that \ref{item0} implies \ref{item1}.

To see that \ref{item1} implies \ref{item2}, assume $\mathrm{Tr}(uL(v^q))=0$ for all $u,v\in\mathbb F_{q^{2n}}$. Then $L(v^q)$ lies in the kernel of every linear form of $\mathbb F_{q^{2n}}/\mathbb F_{q^2}$. Hence, $L(v^q)$ is necessarily zero for any $v\in\mathbb F_{q^{2n}}$ and $L(x)\equiv0\pmod{x^{q^{2n}}-x}$. The rest is obvious.
\end{proof}

We establish a connection between the polynomial representing a sesquilinear forms and the coefficient matrices. For $L(x)=\sum_{i=0}^{n-1}a_ix^{q^{2i}}$, let
\[M_L=\begin{pmatrix}a_0&a_1&\cdots&a_{n-1}\\a_{n-1}^{q^2}&a_0^{q^2}&\cdots&a_{n-2}^{q^2}\\\vdots&\vdots&\ddots&\vdots\\a_1^{q^{2(n-1)}}&a_2^{q^{2(n-1)}}&\cdots&a_0^{q^{2(n-1)}}\end{pmatrix}.\]
For a matrix $M$ over $\mathbb F_{q^{2n}}$, denote by $M^*$ the matrix obtained by raising each entry of the transpose of $M$ to its $q$-th power. Then $M^{**}=M$ if and only if $M$ has entries all in $\mathbb F_{q^2}$.

Given a basis $\beta_1,\dots,\beta_n$ of $\mathbb F_{q^{2n}}/\mathbb F_{q^2}$, the coefficient matrix of $\sigma_L$ with respect to the basis arises from the matrix $M_L$ defined above. The basis is associated with a nonsingular matrix
\begin{equation}\label{B}B=\begin{pmatrix}\beta_1&\beta_1^{q^2}&\cdots&\beta_1^{q^{2(n-1)}}\\\beta_2&\beta_2^{q^2}&\cdots&\beta_2^{q^{2(n-1)}}\\\vdots&\vdots&\ddots&\vdots\\\beta_n&\beta_n^{q^2}&\cdots&\beta_n^{q^{2(n-1)}}\end{pmatrix},\end{equation}
so that
\[M_LB^*=\begin{pmatrix}L(\beta_1^q)&L(\beta_2^q)&\cdots&L(\beta_n^q)\\L(\beta_1^q)^{q^2}&L(\beta_2^q)^{q^2}&\cdots&L(\beta_n^q)^{q^2}\\\vdots&\vdots&\ddots&\vdots\\L(\beta_1^q)^{q^{2(n-1)}}&L(\beta_2^q)^{q^{2(n-1)}}&\cdots&L(\beta_n^q)^{q^{2(n-1)}}\end{pmatrix}.\]
The rank of this matrix is equal to $n-\dim\ker L$, the dimension of the subspace of $\mathbb F_{q^{2n}}/\mathbb F_{q^2}$ spanned by $L(\beta_1^q),\dots,L(\beta_n^q)$. The $(i,j)$ entry of $BM_LB^*$ is $\mathrm{Tr}(\beta_iL(\beta_j^q))$, so $BM_LB^*$ is a coefficient matrix of $\sigma_L$ of rank $n-\dim\ker L$.

Via a linear automorphism $\tau$ of $\mathbb F_{q^{2n}}/\mathbb F_{q^2}$, an equivalent form of $\sigma_L$ has a coefficient matrix with $(i,j)$ entry $\sigma_L(\tau(\beta_i),\tau(\beta_j))$. This resulting matrix is the coefficient matrix of $\sigma_L$ with respect to another basis $\tau(\beta_1),\dots,\tau(\beta_n)$ of $\mathbb F_{q^{2n}}/\mathbb F_{q^2}$. If $C$ is a coefficient matrix of $\sigma_L$, then all its coefficient matrices are congruent to $C$ by change of basis; here, a matrix is said to be congruent to $C$ if it can be written as $TCT^*$ for some $n\times n$ nonsingular matrix $T$ over $\mathbb F_{q^2}$. Therefore, equivalent sesquilinear forms have congruent coefficient matrices, and vice versa.

For $L(x)=\sum_{i=0}^{n-1}a_ix^{q^{2i}}$, let
\[L^*(x)=\sum_{i=0}^{n-1}(a_i^qx)^{q^{2(n-i-1)}}.\]
This polynomial is defined so that $\sigma_L^*=\sigma_{L^*}$, as shown below.

\begin{proposition}
The polynomial $L^*$ is the unique $q^2$-linear polynomial over $\mathbb F_{q^{2n}}$ of degree less than $q^{2n}$ such that $\mathrm{Tr}(uL^*(v^q))=\mathrm{Tr}(vL(u^q))^q$ for all $u,v\in\mathbb F_{q^{2n}}$. If $C$ is the coefficient matrix of $\sigma_L$ with respect to a basis of $\mathbb F_{q^{2n}}/\mathbb F_{q^2}$, then $C^*$ is the coefficient matrix of $\sigma_L^*$ with respect to the same basis.
\end{proposition}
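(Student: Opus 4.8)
The proof splits into the trace identity (with its uniqueness clause) and the statement about coefficient matrices, and I would dispatch the latter first since it is immediate from the definitions. Writing $C$ for the coefficient matrix of $\sigma_L$ with respect to a basis $\beta_1,\dots,\beta_n$, its $(i,j)$ entry is $\sigma_L(\beta_i,\beta_j)$. By the definition of $M^*$ as the transpose with each entry raised to the $q$-th power, the $(i,j)$ entry of $C^*$ is $\sigma_L(\beta_j,\beta_i)^q$, which is precisely $\pi(\sigma_L(\beta_j,\beta_i))=\sigma_L^*(\beta_i,\beta_j)$. Hence $C^*$ is the coefficient matrix of $\sigma_L^*$ with respect to the same basis, and this step needs no reference to the polynomial $L^*$ at all.

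For the trace identity I would argue by a direct term-by-term computation exploiting two standard properties of the trace map $\mathrm{Tr}$ of $\mathbb F_{q^{2n}}/\mathbb F_{q^2}$: first $\mathrm{Tr}(x)^q=\mathrm{Tr}(x^q)$, which follows from $\mathrm{Tr}(x)=\sum_{j=0}^{n-1}x^{q^{2j}}$; and second the Frobenius-invariance $\mathrm{Tr}(x)=\mathrm{Tr}(x^{q^{2k}})$ for every integer $k$. The plan is to write $L(x)=\sum_{i=0}^{n-1}a_ix^{q^{2i}}$ and expand the right-hand side as
\[\mathrm{Tr}(vL(u^q))^q=\mathrm{Tr}\bigl((vL(u^q))^q\bigr)=\sum_{i=0}^{n-1}\mathrm{Tr}\bigl(v^qa_i^qu^{q^{2(i+1)}}\bigr),\]
and then, in the $i$-th summand, apply the substitution $x\mapsto x^{q^{2(n-i-1)}}$ inside the trace. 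Since $u\in\mathbb F_{q^{2n}}$ satisfies $u^{q^{2n}}=u$, this collapses the exponent of $u$ to $q^0$ and, after unwinding $L^*(x)=\sum_{i=0}^{n-1}a_i^{q^{2(n-i-1)+1}}x^{q^{2(n-i-1)}}$, produces exactly the $i$-th term of $\mathrm{Tr}(uL^*(v^q))$. Summing over $i$ then yields the asserted equality.

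The only delicate point is the bookkeeping of exponents: choosing the shift $k=n-i-1$ so that the power of $u$ reduces to $1$ while the powers on $v$ and on $a_i$ simultaneously align with the coefficients defining $L^*$. I expect this to be the main obstacle, although it is a matter of tracking exponents modulo $2n$ rather than any conceptual difficulty.

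Finally, uniqueness follows from Theorem \ref{rep}. If $L_1$ and $L_2$ are $q^2$-linear polynomials of degree less than $q^{2n}$ both satisfying the identity, then $\mathrm{Tr}(uL_1(v^q))=\mathrm{Tr}(uL_2(v^q))$ for all $u,v$, so $\sigma_{L_1-L_2}$ is the zero form; by the equivalence of \ref{item1} and \ref{item2}, $L_1-L_2\equiv0\pmod{x^{q^{2n}}-x}$, and since a nonzero multiple of $x^{q^{2n}}-x$ has degree at least $q^{2n}$, we conclude $L_1=L_2$.
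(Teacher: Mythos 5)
Your proof is correct and follows essentially the same route as the paper: a term-by-term trace computation using Frobenius-invariance of $\mathrm{Tr}$ (you expand $\mathrm{Tr}(vL(u^q))^q$ where the paper expands $\mathrm{Tr}(uL^*(v^q))$, but it is the same calculation read in reverse), uniqueness via Theorem \ref{rep}, and the coefficient-matrix claim read off directly from the definitions of $\sigma^*$ and $M^*$.
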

\begin{proof}
A direct calculation yields
\[\mathrm{Tr}(uL^*(v^q))=\sum_{i=0}^{n-1}\mathrm{Tr}\big(u(a_i^qv^q)^{q^{2(n-i-1)}}\big)=\sum_{i=0}^{n-1}\mathrm{Tr}\big(a_ivu^{q^{2i+1}}\big)^q=\mathrm{Tr}(vL(u^q))^q.\]
The uniqueness follows from Theorem \ref{rep}, and the second statement is an immediate consequence.
\end{proof}

As a result, the following statements can be easily checked:
\begin{itemize}
\item $L^{**}=L$;
\item $(\alpha L)^*=\alpha^qL^*$ for arbitrary $\alpha\in\mathbb F_{q^2}$;
\item $\rad\sigma_L$ is the orthogonal complement of $\im L$ with respect to the standard bilinear form of $\mathbb F_{q^{2n}}/\mathbb F_{q^2}$ induced by $\mathrm{Tr}$;
\item $\rad\sigma_L^*=\pi^{-1}(\ker L)$, where $\pi$ is the automorphism that raises an element to its $q$-th power;
\item $\dim\rad\sigma_L=\dim\rad\sigma_L^*$;
\item $M_{L^*}=PM_L^*$, where $P$ is the $n\times n$ permutation matrix
\begin{equation}\label{P}\begin{pmatrix}0&\cdots&0&1\\1&\cdots&0&0\\\vdots&\ddots&\vdots&\vdots\\0&\cdots&1&0\end{pmatrix}.\end{equation}
\end{itemize}

We are now ready for the classification of sesquilinear forms over finite fields.

\begin{lemma}[{\cite[Theorem 2.3.1]{wall1963conjugacy}}]
For a matrix $A$ in $\mathrm{GL}_n(q^2)$, the general linear group formed by $n\times n$ nonsingular matrices over $\mathbb F_{q^2}$, $A^*$ is conjugate to $A^{-1}$ in $\mathrm{GL}_n(q^2)$ if and only if $A=C^*C^{-1}$ for some $C\in\mathrm{GL}_n(q^2)$.
\end{lemma}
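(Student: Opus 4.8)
\emph{Proof proposal.} The plan is to treat the two directions separately: the implication starting from $A=C^*C^{-1}$ is a one-line computation, while the converse calls for a structural construction of a nondegenerate form with prescribed asymmetry.

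First I would dispose of the easy direction. If $A=C^*C^{-1}$ with $C\in\mathrm{GL}_n(q^2)$, then using $(XY)^*=Y^*X^*$, $(X^{-1})^*=(X^*)^{-1}$ and $C^{**}=C$ one computes $A^*=(C^*)^{-1}C$ and $A^{-1}=C(C^*)^{-1}$, so that $C^{-1}A^{-1}C=(C^*)^{-1}C=A^*$. Thus $C$ itself conjugates $A^{-1}$ to $A^*$, which is the stated necessity.

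For the converse I would first reformulate the target. Writing $A=C^*C^{-1}$ is the same as finding a nonsingular $C$ with $C^*=AC$, i.e.\ exhibiting a nondegenerate sesquilinear form on $\mathbb F_{q^2}^n$ whose coefficient matrix $C$ has asymmetry $C^*C^{-1}=A$. Note that $\{X\in M_n(\mathbb F_{q^2}):X^*=AX\}$ is an $\mathbb F_q$-subspace (closure under scalars $c$ needs $c^q=c$), so concretely the task is to locate an invertible element of this space. I would organize the search through the module structure: let $t$ act on $V=\mathbb F_{q^2}^n$ by $A$, making $V$ a module over $\mathbb F_{q^2}[t,t^{-1}]$. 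Conjugate-transpose combined with $t\mapsto t^{-1}$ defines a duality on such modules carrying $(V,A)$ to $(V,(A^*)^{-1})$, and the hypothesis $A^*\sim A^{-1}$ says precisely that $(V,A)$ is isomorphic to its own dual. Decomposing $V$ into indecomposables (primary components of the elementary divisors of $A$), the duality permutes the isomorphism types, and self-duality forces these to occur either in dual pairs $\{M,M^\vee\}$ with $M\not\cong M^\vee$ or as self-dual blocks, with matching multiplicities. On each dual pair I would place the hyperbolic pairing between $M$ and $M^\vee$, which is automatically nondegenerate and carries the prescribed asymmetry; assembling the blocks orthogonally then produces the required nonsingular $C$.

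The main obstacle is the self-dual blocks, where one must genuinely construct a nondegenerate form realizing the given asymmetry on a single self-dual isotypic component. It is exactly here that finiteness of the field is indispensable: one invokes that nondegenerate Hermitian forms over $\mathbb F_{q^2}/\mathbb F_q$ exist in every rank and that the norm $\mathbb F_{q^2}^\times\to\mathbb F_q^\times$ is surjective, which annihilates the local obstruction that would survive over, say, $\mathbb R$. (Equivalently, the converse can be phrased as surjectivity of a twisted norm map and pursued via Lang's theorem, but the obstruction to descending to an $\mathbb F_{q^2}$-rational solution is governed by the very same invariant, so no genuine shortcut results.) That the hypothesis cannot be dropped is already visible when $n=1$: there $A=C^*C^{-1}=C^{q-1}$ is solvable exactly when $A^{q+1}=1$, which is also exactly the condition $A^*=A^{-1}$.
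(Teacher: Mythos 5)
First, note that the paper offers no proof of this lemma: it is quoted directly from the cited reference of Wall (Theorem 2.3.1), so there is no in-paper argument to compare against, and any assessment must be of your proposal on its own terms. Your forward direction is complete and correct: from $A=C^*C^{-1}$ one gets $A^*=(C^*)^{-1}C$ and $A^{-1}=C(C^*)^{-1}$, hence $C^{-1}A^{-1}C=A^*$. The observation that $\{X:X^*=AX\}$ is only an $\mathbb F_q$-subspace, and the $n=1$ sanity check (the image of $C\mapsto C^{q-1}$ is exactly the norm-one subgroup $\{A:A^{q+1}=1\}=\{A:A^*=A^{-1}\}$), are both right and correctly isolate where finiteness of the field enters.

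The converse, however, is an outline with a genuine hole at exactly the spot you flag. Recasting the problem as finding a nonsingular $C$ with $C^*=AC$, decomposing $(V,A)$ into primary components under the duality $(V,A)\mapsto(V,(A^*)^{-1})$, and using hyperbolic pairings on non-self-dual pairs (with Krull--Schmidt matching the multiplicities) is the right strategy and is essentially Wall's. But on a self-dual primary component the two facts you invoke --- existence of nondegenerate Hermitian forms in every rank and surjectivity of the norm $\mathbb F_{q^2}^\times\to\mathbb F_q^\times$ --- only dispose of the semisimple case. For a non-semisimple self-dual block, say a single Jordan block $J_k(\lambda)$ with $\lambda^{q+1}=1$ and $k\ge 2$, or a block attached to a power of a conjugate-self-reciprocal irreducible of degree greater than $1$, one must still exhibit a nonsingular $C$ with $C^*=J_k(\lambda)C$; this requires an explicit construction (typically an anti-triangular or Hankel-type matrix built from a trace form on the residue field, followed by a nondegeneracy check), and nothing in the proposal produces it. Until that block is handled, the hard direction is asserted rather than proved. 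Your parenthetical remark about Lang's theorem is accurate: writing $\tau(C)=(C^*)^{-1}$ for the unitary Frobenius, the equation $C^*=AC$ becomes $C\,\tau(C)=A^{-1}$, which is not Lang's equation $C^{-1}\tau(C)=B$, and its solvability is obstructed by precisely the conjugacy condition in the statement --- so no shortcut around the block-by-block construction is available.
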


\begin{theorem}\label{equiv}
Let $L_0$ and $L_1$ be $q^2$-linear polynomials over $\mathbb F_{q^{2n}}$ of degree less than $q^{2n}$, $C_0$ and $C_1$ be some coefficient matrices of $\sigma_{L_0}$ and $\sigma_{L_1}$ respectively, and suppose that they are both nonsingular. Then the following conditions are equivalent:
\begin{enumerate}
\item\label{item3}$\sigma_0$ and $\sigma_1$ are equivalent;
\item\label{item4}$C_0^*C_0^{-1}$ and $C_1^*C_1^{-1}$ are conjugate in $\mathrm{GL}_n(q^2)$;
\item $PM_{L_0}^*M_{L_0}^{-1}$ and $PM_{L_1}^*M_{L_1}^{-1}$ are conjugate in $\mathrm{GL}_n(q^{2n})$, where $P$ is the permutation matrix as in \eqref{P}.
\end{enumerate}
\end{theorem}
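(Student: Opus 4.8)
The plan is to prove the equivalences \ref{item3}$\Leftrightarrow$\ref{item4} and \ref{item4}$\Leftrightarrow$(iii) separately, relying on the already-established correspondence between equivalence of forms and congruence of coefficient matrices, together with the identity $C=BM_LB^*$ and Wall's lemma. Throughout, $\sigma_0=\sigma_{L_0}$ and $\sigma_1=\sigma_{L_1}$, and the asymmetry of a nonsingular coefficient matrix $C$ is the matrix $C^*C^{-1}$.

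For \ref{item3}$\Leftrightarrow$\ref{item4}, recall that $\sigma_0$ and $\sigma_1$ are equivalent precisely when $C_0$ and $C_1$ are congruent. The forward implication is then a direct computation: if $C_1=TC_0T^*$ with $T\in\mathrm{GL}_n(q^2)$, then $T^{**}=T$ and
\[C_1^*C_1^{-1}=TC_0^*T^*(T^*)^{-1}C_0^{-1}T^{-1}=T(C_0^*C_0^{-1})T^{-1},\]
so the two asymmetries are conjugate in $\mathrm{GL}_n(q^2)$. For the converse, suppose $C_1^*C_1^{-1}=S(C_0^*C_0^{-1})S^{-1}$ with $S\in\mathrm{GL}_n(q^2)$. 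Replacing $C_0$ by the congruent matrix $SC_0S^*$, which still represents a form equivalent to $\sigma_0$, I may assume $C_0^*C_0^{-1}=C_1^*C_1^{-1}=A$. It then remains to show that two nonsingular matrices over $\mathbb F_{q^2}$ sharing the same asymmetry $A$ are congruent. Writing $C_1=UC_0$ with $U\in\mathrm{GL}_n(q^2)$, the hypothesis forces $\phi(U)=U$, where $\phi(X)=C_0X^*C_0^{-1}$ is a semilinear anti-automorphism of $\mathrm{GL}_n(q^2)$, and the sought congruence $C_1=TC_0T^*$ amounts to solving the twisted norm equation $T\phi(T)=U$ for $T\in\mathrm{GL}_n(q^2)$. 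This is exactly where Wall's lemma enters: it guarantees the solvability of such equations, equivalently that $\mathrm{GL}_n(q^2)$ acts transitively by congruence on the nonsingular matrices with a fixed asymmetry. I expect this transitivity to be the main obstacle.

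For \ref{item4}$\Leftrightarrow$(iii), I use the coefficient matrix $C=BM_LB^*$, where $B$ is the basis matrix in \eqref{B}. Because the Frobenius $x\mapsto x^q$ has order $2n$ on $\mathbb F_{q^{2n}}$, the cyclic structure of $B$ yields $B^{**}=BP$ with $P$ as in \eqref{P}. Hence
\[C^*C^{-1}=B^{**}M_L^*B^*(B^*)^{-1}M_L^{-1}B^{-1}=B\bigl(PM_L^*M_L^{-1}\bigr)B^{-1},\]
so $C^*C^{-1}$ is conjugate in $\mathrm{GL}_n(q^{2n})$ to $PM_L^*M_L^{-1}$ (which equals $M_{L^*}M_L^{-1}$ by the identity $M_{L^*}=PM_L^*$). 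Applying this to both $L_0$ and $L_1$, condition (iii) says exactly that $C_0^*C_0^{-1}$ and $C_1^*C_1^{-1}$ are conjugate in $\mathrm{GL}_n(q^{2n})$. Since both matrices have entries in $\mathbb F_{q^2}$ and similarity of matrices over a field is unaffected by extension of scalars (the invariant factors being computed over the base field), they are conjugate in $\mathrm{GL}_n(q^{2n})$ if and only if they are conjugate in $\mathrm{GL}_n(q^2)$, which is \ref{item4}.

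In summary, the forward part of \ref{item3}$\Rightarrow$\ref{item4} and the whole of \ref{item4}$\Leftrightarrow$(iii) are routine, the latter resting only on the conjugation $C^*C^{-1}=B(PM_L^*M_L^{-1})B^{-1}$ and the field-independence of similarity. The crux is the converse \ref{item4}$\Rightarrow$\ref{item3}, namely that the conjugacy class of the asymmetry is a \emph{complete} invariant of the congruence class; here Wall's lemma supplies the essential transitivity, reflecting the fact that over a finite field nondegenerate Hermitian-type data on each primary component of $A$ is determined by rank alone.
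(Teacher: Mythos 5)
Your treatment of \ref{item3}$\Rightarrow$\ref{item4} and of \ref{item4}$\Leftrightarrow$(iii) is correct and coincides with the paper's: the computation $C_1^*C_1^{-1}=T(C_0^*C_0^{-1})T^{-1}$, the identity $C^*C^{-1}=B(PM_L^*M_L^{-1})B^{-1}$ via $B^{**}=BP$, and the invariance of similarity under field extension are exactly what the paper uses. The problem is the converse \ref{item4}$\Rightarrow$\ref{item3}. You correctly reduce it to solving $T\phi(T)=U$ with $\phi(X)=C_0X^*C_0^{-1}$ and $\phi(U)=U$, i.e.\ to the transitivity of the congruence action on nonsingular matrices with a fixed asymmetry, but then you assert that Wall's lemma ``guarantees the solvability of such equations.'' It does not: as cited, the lemma only says that $A=C^*C^{-1}$ is solvable \emph{for some} $C$ precisely when $A^*$ is conjugate to $A^{-1}$. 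That is an existence statement about the image of $\varphi(C)=C^*C^{-1}$; it says nothing about whether two solutions $C_0,C_1$ of $C^*C^{-1}=A$ must be congruent. You flag this yourself (``I expect this transitivity to be the main obstacle''), so the crux of the theorem is left unproved.

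The paper closes exactly this gap with a counting argument rather than a direct solution of the twisted norm equation. It lets $\mathrm{GL}_n(q^2)$ act on itself by congruence $A\mapsto TAT^*$, shows that $T$ has a fixed point only if $T\in\im\varphi$ (since $TAT^*=A$ forces $T^*=A^{-1}T^{-1}A$, which is where Wall's lemma is actually used), and that in that case the fixed points form a coset of the centralizer of $T$, of size $\nu(T)$. Burnside's lemma then gives that the number of congruence classes equals $|\mathrm{GL}_n(q^2)|^{-1}\sum_{T\in\im\varphi}\nu(T)$, which by orbit--stabilizer is also the number of conjugacy classes contained in $\im\varphi$ (after checking $\im\varphi$ is closed under conjugation). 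Since your forward computation shows the map from congruence classes to conjugacy classes in $\im\varphi$ is well defined and it is surjective by definition of $\im\varphi$, equality of the two finite cardinalities forces injectivity --- which is precisely the transitivity you need. If you want to keep your direct approach, you must either reproduce such a counting argument or invoke a stronger form of Wall's classification than the one quoted; as written, the key implication rests on an unproven claim.
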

\begin{proof}
Let $\varphi$ be the map on $\mathrm{GL}_n(q^2)$ defined by $\varphi(A)=A^*A^{-1}$ for $A\in\mathrm{GL}_n(q^2)$. Then $\im\varphi$ consists precisely of $A$ in $\mathrm{GL}_n(q^2)$ such that $A^*$ is conjugate to $A^{-1}$, according to the previous lemma. Consider the action of $\mathrm{GL}_n(q^2)$ on itself by congruence, where each $T\in\mathrm{GL}_n(q^2)$ acts on $\mathrm{GL}_n(q^2)$ by $A\mapsto TAT^*$. If there exists $A\in\mathrm{GL}_n(q^2)$ that is fixed by $T$, then $T\in\im\varphi$, as
\[T^*=(TA)^{-1}TAT^*=(TA)^{-1}A=A^{-1}T^{-1}A.\]
Assume $T\in\im\varphi$, so that $T^*=A_0^{-1}T^{-1}A_0$ for some $A_0\in\mathrm{GL}_n(q^2)$. In this case, for $A\in\mathrm{GL}_n(q^2)$, $A$ is fixed by $T$ if and only if $AA_0^{-1}$ belongs to the centralizer of $T$. In fact,
\[TAT^*=TAA_0^{-1}(AA_0^{-1}T)^{-1}AA_0^{-1}TA_0T^*=TAA_0^{-1}(AA_0^{-1}T)^{-1}A.\]
Consequently, the number of fixed points of $T$ in $\mathrm{GL}_n(q^2)$ is the order of the centralizer of $T$, denoted by $\nu(T)$, if $T\in\im\varphi$, and is $0$ otherwise. By Burnside's lemma, the number of orbits of the action is thereby
\[\frac1{|\mathrm{GL}_n(q^2)|}\sum_{T\in\im\varphi}\nu(T).\]
Moreover, for $T\in\im\varphi$ with $A\in\mathrm{GL}_n(q^2)$ fixed by $T$, $STS^{-1}\in\im\varphi$ for every $S\in\mathrm{GL}_n(q^2)$, since
\[(STS^{-1})SAS^*(STS^{-1})^*=STAS^*(S^{-1})^*T^*S^*=STAT^*S^*=SAS^*.\]
Then $\mathrm{GL}_n(q^2)$ acts on $\im\varphi$ by conjugation, and the stabilizer of an element of $\im\varphi$ is exactly its centralizer. The number of conjugacy classes in $\im\varphi$ is also
\[\frac1{|\mathrm{GL}_n(q^2)|}\sum_{T\in\im\varphi}\nu(T).\]

Suppose that $C_0=TC_1T^*$ for some $T\in\mathrm{GL}_n(q^2)$. Then
\[C_0^*C_0^{-1}=TC_1^*T^*(T^*)^{-1}C_1^{-1}T^{-1}=TC_1^*C_1^{-1}T^{-1},\]
which means $C_0^*C_0^{-1}$ is conjugate to $C_1^*C_1^{-1}$. Hence, given the above arguments on the numbers of orbits, the map $\varphi$ induces a one-to-one correspondence between the congruence classes in $\mathrm{GL}_n(q^2)$ and the conjugacy classes in $\im\varphi$. This establishes the equivalence between \ref{item3} and \ref{item4}.

Let $\beta_1,\dots,\beta_n$ be a basis of $\mathbb F_{q^{2n}}/\mathbb F_{q^2}$ with associated matrix $B$ as in \eqref{B}, so that $\sigma_{L_0}$ has a coefficient matrix $BM_{L_0}B^*$. Note that $B^{**}=BP$ and
\[(BM_{L_0}B^*)^*(BM_{L_0}B^*)^{-1}=B^{**}M_{L_0}^*B^*(B^*)^{-1}M_{L_0}^{-1}B^{-1}=BPM_{L_0}^*M_{L_0}^{-1}B^{-1}.\]
Then $C_0^*C_0^{-1}$ and $PM_{L_0}^*M_{L_0}^{-1}$ are conjugate in $\mathrm{GL}_n(q^{2n})$, as well as $C_1^*C_1^{-1}$ and $PM_{L_1}^*M_{L_1}^{-1}$. Since, for matrices over $\mathbb F_{q^2}$, conjugacy in $\mathrm{GL}_n(q^2)$ is equivalent to conjugacy in $\mathrm{GL}_n(q^{2n})$, the proof is complete.
\end{proof}

\begin{example}\label{2d}
Let $n=2$ and $L(x)=ax^{q^2}+bx$ over $\mathbb F_{q^4}$ with $\mathrm N(a)\ne\mathrm N(b)$, where $\mathrm N$ denotes the norm map of $\mathbb F_{q^4}/\mathbb F_{q^2}$. We classify all sesquilinear forms in this case, by determining canonical forms of their nonsingular coefficient matrices. It can be easily verified that
\[M_L^{-1}=\frac1{\mathrm N(a)-\mathrm N(b)}\begin{pmatrix}-b^{q^2}&a\\a^{q^2}&-b\end{pmatrix}.\]
Consider
\[\begin{split}M&=\begin{pmatrix}0&1\\1&0\end{pmatrix}M_L^*M_L^{-1}\\&=\frac1{\mathrm N(a)-\mathrm N(b)}\begin{pmatrix}0&1\\1&0\end{pmatrix}\begin{pmatrix}b^q&a^{q^3}\\a^q&b^{q^3}\end{pmatrix}\begin{pmatrix}-b^{q^2}&a\\a^{q^2}&-b\end{pmatrix}\\&=\frac1{\mathrm N(a)-\mathrm N(b)}\begin{pmatrix}a^{q^2}b^{q^3}-a^qb^{q^2}&a^{q+1}-b^{q^3+1}\\a^{q^3+q^2}-b^{q^2+q}&ab^q-a^{q^3}b\end{pmatrix},\end{split}\]
where
\[\begin{split}&\mathrel{\phantom{=}}\mathrm N\big(a^{q^2}b^{q^3}-a^qb^{q^2}\big)-\mathrm N\big(a^{q+1}-b^{q^3+1}\big)\\&=\mathrm N(ab^q)^{q^2}+\mathrm N(ab^q)^q-\mathrm{Tr}((ab^q)^{q+1})-\mathrm N(a)^{q+1}-\mathrm N(b)^{q^3+1}+\mathrm{Tr}\big((ab^q)^{q^3+q^2}\big)\\&=\mathrm N(ab^q)+\mathrm N(ab^q)^q-\mathrm N(a)^{q+1}-\mathrm N(b)^{q+1}\\&=-(\mathrm N(a)-\mathrm N(b))^{q+1}.\end{split}\]
Let $\alpha=\mathrm{Tr}(ab^q)$ and $\beta=\mathrm N(a)-\mathrm N(b)$, so that the matrix $M$ has trace $(\alpha-\alpha^q)\beta^{-1}$ and determinant $-\beta^{q-1}$. Observe that the characteristic polynomial of $M$ has discriminant
\[(\alpha-\alpha^q)^2\beta^{-2}+4\beta^{q-1}.\]
If the polynomial has distinct roots in its splitting field, then the conjugacy class of $M$ in $\mathrm{GL}_2(q^4)$ is uniquely determined by $(\alpha-\alpha^q)\beta^{-1}$ and $-\beta^{q-1}$; otherwise, $M$ is either a scalar matrix or conjugate to a nontrivial Jordan block. If $a^{q+1}-b^{q^3+1}=0$, then $M$ is a scalar matrix, since $b\ne0$ and
\[b\big(a^{q^2}b^{q^3}-a^qb^{q^2}\big)=a^{q^2+q+1}-a^qb^{q^2+1}=a^q\beta,\]
with
\[b\big(ab^q-a^{q^3}b\big)=ab^{(q^3+1)q}-a^{q^3}b^2=a^{q^2+q+1}-a^qb^{q^2+1}\big(a^{q+1}b^{-1-q^3}\big)^{q^2-q}=a^q\beta.\]
We conclude that $M$ is conjugate to
\begin{itemize}
\item any $2\times2$ matrix over $\mathbb F_{q^2}$ with trace $(\alpha-\alpha^q)\beta^{-1}$ and determinant $-\beta^{q-1}$ if $(\alpha-\alpha^q)^2+4\beta^{q+1}\ne0$;
\item $\begin{pmatrix}a^qb^{-1}&0\\0&a^qb^{-1}\end{pmatrix}$ if $a^{q+1}-b^{q^3+1}=0$;
\item $\begin{pmatrix}\frac12(\alpha-\alpha^q)\beta^{-1}&0\\1&\frac12(\alpha-\alpha^q)\beta^{-1}\end{pmatrix}$ if $(\alpha-\alpha^q)^2+4\beta^{q+1}=0$ and $a^{q+1}-b^{q^3+1}\ne0$ with $q$ odd;
\item $\begin{pmatrix}\sqrt{\beta^{q-1}}&0\\1&\sqrt{\beta^{q-1}}\end{pmatrix}$ if $(\alpha-\alpha^q)^2+4\beta^{q+1}=0$ and $a^{q+1}-b^{q^3+1}\ne0$ with $q$ even.
\end{itemize}
\end{example}

Next, consider a sesquilinear form $\sigma$ with singular coefficient matrices. If $\mathbb F_{q^{2n}}$ is the direct sum of some subspaces $W$ and $W_0$ of $\mathbb F_{q^{2n}}/\mathbb F_{q^2}$, such that $\sigma(w,v)=\sigma(v,w)=0$ for all $w\in W_0$ and $v\in\mathbb F_{q^{2n}}$, then it suffices to investigate $\sigma$ on $W$. By definition, every such subspace $W_0$ is contained in $\rad\sigma\cap\rad\sigma^*$. In the case $W_0=\rad\sigma\cap\rad\sigma^*$, we call the restriction of $\sigma$ to $W$ a reduced form of $\sigma$.

\begin{proposition}
Two sesquilinear forms $\sigma_0$ and $\sigma_1$ of $\mathbb F_{q^{2n}}/\mathbb F_{q^2}$ are equivalent if and only if $\dim(\rad\sigma_0\cap\rad\sigma_0^*)=\dim(\rad\sigma_1\cap\rad\sigma_1^*)$ and $\sigma_0$ and $\sigma_1$ have reduced forms whose coefficient matrices are congruent.
\end{proposition}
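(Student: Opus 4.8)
The plan is to make the reduced form into a well-defined equivalence invariant and then reduce both implications to the fact, established earlier in the excerpt, that two sesquilinear forms are equivalent exactly when they have congruent coefficient matrices. First I would isolate the annihilation property of $W_0:=\rad\sigma\cap\rad\sigma^*$: a vector $w$ lies in $W_0$ precisely when $\sigma(w,v)=0$ and $\sigma(v,w)=0$ for all $v$, the second equality coming from $w\in\rad\sigma^*$, i.e.\ $\pi(\sigma(v,w))=0$. Consequently, for any decomposition $\mathbb F_{q^{2n}}=W\oplus W_0$ the mixed terms between $W$ and $W_0$ vanish, so $\sigma$ depends only on the $W$-part. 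The first genuine step is to show the reduced form is independent of the complement: if $W$ and $W'$ are two complements of $W_0$ and $p$ is the projection onto $W$ along $W_0$, then $p|_{W'}\colon W'\to W$ is a linear isomorphism and $\sigma(w_1',w_2')=\sigma(p(w_1'),p(w_2'))$ for all $w_1',w_2'\in W'$, the discarded summands lying in $W_0$. Thus all reduced forms of $\sigma$ are equivalent and share one congruence class of coefficient matrices, which I will call \emph{the} reduced coefficient matrix of $\sigma$.

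For the forward direction, suppose $\sigma_0(u,v)=\sigma_1(\tau(u),\tau(v))$ for a linear automorphism $\tau$. Applying $\pi$ gives $\sigma_0^*(u,v)=\sigma_1^*(\tau(u),\tau(v))$, and bijectivity of $\tau$ yields $\tau(\rad\sigma_0)=\rad\sigma_1$ and $\tau(\rad\sigma_0^*)=\rad\sigma_1^*$; hence $\tau(\rad\sigma_0\cap\rad\sigma_0^*)=\rad\sigma_1\cap\rad\sigma_1^*$, so the two intersections have equal dimension. Moreover $\tau$ sends a complement $W^{(0)}$ of $\rad\sigma_0\cap\rad\sigma_0^*$ to a complement of $\rad\sigma_1\cap\rad\sigma_1^*$, and $\tau|_{W^{(0)}}$ is an equivalence of $\sigma_0|_{W^{(0)}}$ with the corresponding reduced form of $\sigma_1$. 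By the well-definedness above together with the equivalence--congruence correspondence, the reduced coefficient matrices are congruent.

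For the converse, let $d$ denote the common dimension $\dim(\rad\sigma_0\cap\rad\sigma_0^*)=\dim(\rad\sigma_1\cap\rad\sigma_1^*)$ and fix $\mathbb F_{q^{2n}}=W^{(0)}\oplus W_0^{(0)}=W^{(1)}\oplus W_0^{(1)}$ with $W_0^{(i)}=\rad\sigma_i\cap\rad\sigma_i^*$. Congruence of the reduced coefficient matrices furnishes, via the equivalence--congruence correspondence applied on the spaces $W^{(i)}$, a linear isomorphism $\tau_W\colon W^{(0)}\to W^{(1)}$ with $\sigma_0(x,y)=\sigma_1(\tau_W(x),\tau_W(y))$ for $x,y\in W^{(0)}$. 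Since $\dim W_0^{(0)}=\dim W_0^{(1)}=d$, pick any isomorphism $\tau_0\colon W_0^{(0)}\to W_0^{(1)}$ and set $\tau=\tau_W\oplus\tau_0$, an automorphism of $\mathbb F_{q^{2n}}/\mathbb F_{q^2}$. Writing $u=u_W+u_0$ and $v=v_W+v_0$ and invoking the annihilation property for both $W_0^{(0)}$ (under $\sigma_0$) and $W_0^{(1)}=\tau(W_0^{(0)})$ (under $\sigma_1$), every term carrying a $W_0$-component vanishes, leaving $\sigma_0(u,v)=\sigma_0(u_W,v_W)=\sigma_1(\tau_W(u_W),\tau_W(v_W))=\sigma_1(\tau(u),\tau(v))$. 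Hence $\sigma_0$ and $\sigma_1$ are equivalent.

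I expect the main obstacle to be the bookkeeping in the well-definedness step and in the gluing, rather than any deep difficulty: one must confirm that the equivalence--congruence correspondence, stated in the excerpt for forms on $\mathbb F_{q^{2n}}/\mathbb F_{q^2}$, applies verbatim to forms on the abstract complements $W^{(i)}$ (it does, as these are again finite-dimensional $\mathbb F_{q^2}$-spaces), and that $\tau_W\oplus\tau_0$ really is an automorphism compatible with both decompositions. Once the annihilation property of $\rad\sigma\cap\rad\sigma^*$ is isolated, the conceptual content—that $W_0$ contributes nothing and the reduced form is an equivalence invariant—falls out cleanly.
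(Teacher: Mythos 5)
Your proof is correct and follows essentially the same route as the paper: split off $\rad\sigma\cap\rad\sigma^*$, observe that it contributes nothing to the form, and reduce both implications to the equivalence--congruence correspondence for coefficient matrices. The only differences are presentational---you work with linear maps and direct-sum decompositions where the paper extends a basis and manipulates block-diagonal coefficient matrices---and your explicit well-definedness argument for the reduced form (independence of the chosen complement of $\rad\sigma\cap\rad\sigma^*$) is a useful addition that the paper leaves implicit.
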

\begin{proof}
Take a basis of $\rad\sigma_0\cap\rad\sigma_0^*$ and extend it to a basis of $\mathbb F_{q^{2n}}/\mathbb F_{q^2}$. With this basis $\sigma_0$ has a coefficient matrix $\begin{pmatrix}C_0&\mathbf0\\\mathbf0&\mathbf0\end{pmatrix}$, where $C_0$ is a coefficient matrix of a reduced form of $\sigma_0$. In the same manner, $\sigma_1$ has a coefficient matrix $\begin{pmatrix}C_1&\mathbf0\\\mathbf0&\mathbf0\end{pmatrix}$.

If $\dim(\rad\sigma_0\cap\rad\sigma_0^*)=\dim(\rad\sigma_1\cap\rad\sigma_1^*)$ and $C_0$ and $C_1$ are congruent, then the coefficient matrices of $\sigma_0$ and $\sigma_1$ are congruent by a nonsingular block diagonal matrix. Conversely, assume that there is a linear automorphism $\tau$ of $\mathbb F_{q^{2n}}/\mathbb F_{q^2}$ such that $\sigma_0(u,v)=\sigma_1(\tau(u),\tau(v))$ for all $u,v\in\mathbb F_{q^{2n}}$. Then $\tau$ maps $\rad\sigma_0\cap\rad\sigma_0^*$ onto $\rad\sigma_1\cap\rad\sigma_1^*$, and thus these two subspaces have the same dimension. Now $C_0$ and $C_1$ have the same size, and $\begin{pmatrix}C_0&\mathbf0\\\mathbf0&\mathbf0\end{pmatrix}$ being congruent to $\begin{pmatrix}C_1&\mathbf0\\\mathbf0&\mathbf0\end{pmatrix}$ implies that $C_0$ is congruent to $C_1$, as easily seen.
\end{proof}

\begin{corollary}\label{reduced}
Let $C$ be a coefficient matrix of a reduced form of a sesquilinear form $\sigma$ of $\mathbb F_{q^{2n}}/\mathbb F_{q^2}$. Then $\sigma$ has a diagonal coefficient matrix whose nonzero entries are $d_1,\dots,d_r$, if and only if $\rad\sigma=\rad\sigma^*$ and $C^*C^{-1}$ is conjugate to the diagonal matrix with entries $d_1^{q-1},\dots,d_r^{q-1}$.
\end{corollary}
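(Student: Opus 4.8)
The plan is to reduce the whole statement to a question about the reduced form and then feed it into Theorem~\ref{equiv}. Write $\sigma_D$ for the sesquilinear form of $\mathbb F_{q^{2n}}/\mathbb F_{q^2}$ whose coefficient matrix in the standard basis is $D=\operatorname{diag}(d_1,\dots,d_r,0,\dots,0)$; by the congruence--equivalence dictionary established before Theorem~\ref{equiv}, saying that $\sigma$ has a diagonal coefficient matrix with nonzero entries $d_1,\dots,d_r$ is the same as saying $\sigma$ is equivalent to $\sigma_D$. Thus the target becomes: $\sigma$ is equivalent to $\sigma_D$ if and only if $\rad\sigma=\rad\sigma^*$ and $C^*C^{-1}$ is conjugate to $\operatorname{diag}(d_1^{q-1},\dots,d_r^{q-1})$.

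First I would record the pivotal fact that $C$ is nonsingular precisely when $\rad\sigma=\rad\sigma^*$. Writing $\mathbb F_{q^{2n}}=W\oplus W_0$ with $W_0=\rad\sigma\cap\rad\sigma^*$, a short check gives $\rad(\sigma|_W)=\rad\sigma\cap W$ (using $W_0\subseteq\rad\sigma^*$ to kill the $W_0$-components) together with the decomposition $\rad\sigma=(\rad\sigma\cap W)\oplus W_0$. Since $\dim\rad\sigma=\dim\rad\sigma^*$, the reduced form is nondegenerate, equivalently $C$ is invertible, exactly when $\rad\sigma\subseteq\rad\sigma^*$, i.e.\ when $\rad\sigma=\rad\sigma^*$. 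This disposes at once of the case $\rad\sigma\neq\rad\sigma^*$, where $C$ is singular and both sides of the asserted equivalence are false.

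Next I would treat $\sigma_D$ directly by reading the radicals off the diagonal matrix $D$. A vector $u$ lies in $\rad\sigma_D$ iff $u^\top D=0$, and a vector $v$ lies in $\rad\sigma_D^*$ iff $D\,\pi(v)=0$ with $\pi(v)$ the coordinatewise $q$-th power; both conditions force precisely the coordinates indexed by the nonzero $d_i$ to vanish. Hence $\rad\sigma_D=\rad\sigma_D^*$ is the span of the last $n-r$ standard basis vectors, so $\dim(\rad\sigma_D\cap\rad\sigma_D^*)=n-r$ and a reduced form of $\sigma_D$ has coefficient matrix $\operatorname{diag}(d_1,\dots,d_r)$. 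In particular the existence of a diagonal coefficient matrix always forces $\rad\sigma=\rad\sigma^*$, which supplies one half of the right-hand condition.

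Finally, assuming $\rad\sigma=\rad\sigma^*$ so that $C$ is a nonsingular $r\times r$ matrix, I would combine the preceding proposition with Theorem~\ref{equiv}. By that proposition $\sigma$ is equivalent to $\sigma_D$ if and only if their double radicals have equal dimension (both equal $n-r$) and the reduced coefficient matrices $C$ and $\operatorname{diag}(d_1,\dots,d_r)$ are congruent. The one-line computation $\operatorname{diag}(d_1,\dots,d_r)^*\operatorname{diag}(d_1,\dots,d_r)^{-1}=\operatorname{diag}(d_1^{q-1},\dots,d_r^{q-1})$ then lets Theorem~\ref{equiv} translate ``$C$ congruent to $\operatorname{diag}(d_1,\dots,d_r)$'' into ``$C^*C^{-1}$ conjugate to $\operatorname{diag}(d_1^{q-1},\dots,d_r^{q-1})$'', closing both directions at once. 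The step I expect to be the main obstacle is exactly the interplay between this assembly and the nonsingularity criterion: one must verify cleanly that $\rad\sigma=\rad\sigma^*$ is equivalent to invertibility of the reduced matrix $C$, so that $C^{-1}$ is legitimate and the size of $C$ is matched against the $r\times r$ diagonal target, after which the remaining content is just the congruence--conjugacy correspondence of Theorem~\ref{equiv} specialized to a diagonal matrix.
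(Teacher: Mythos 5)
Your proposal is correct and follows essentially the same route as the paper's proof: establish that $\rad\sigma=\rad\sigma^*$ is exactly the nonsingularity of the reduced coefficient matrix $C$ via the rank count $\rank C=n-\dim\rad\sigma$ versus the order $n-\dim(\rad\sigma\cap\rad\sigma^*)$, observe that a diagonal coefficient matrix forces both radicals to be the span of the last $n-r$ basis vectors, and then pass through the preceding proposition and Theorem~\ref{equiv} to convert congruence of $C$ with $\operatorname{diag}(d_1,\dots,d_r)$ into conjugacy of $C^*C^{-1}$ with $\operatorname{diag}(d_1^{q-1},\dots,d_r^{q-1})$. The only difference is presentational (you package the diagonal target as an auxiliary form $\sigma_D$ and make the singular case explicit), so no further comment is needed.
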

\begin{proof}
Note first that the condition $\rad\sigma=\rad\sigma^*$ implies that $C$ is nonsingular, since then $\rank C=n-\dim\rad\sigma=n-\dim(\rad\sigma\cap\rad\sigma^*)$, which is the order of $C$. Suppose that $\sigma$ has a diagonal coefficient matrix whose nonzero entries are $d_1,\dots,d_r$ with $r=\rank C$. Then for some basis $\beta_1,\dots,\beta_n$ of $\mathbb F_{q^{2n}}/\mathbb F_{q^2}$, one has $\sigma(u,v)=\sum_{i=1}^rd_iu_iv_i^q$ if $u=\sum_{i=1}^nu_i\beta_i$ and $v=\sum_{i=1}^nv_i\beta_i$ with $u_i,v_i\in\mathbb F_{q^2}$. In this case, both $\rad\sigma$ and $\rad\sigma^*$ are spanned by $\beta_{r+1},\dots,\beta_n$ over $\mathbb F_{q^2}$, and $C$ is congruent to the diagonal matrix with entries $d_1,\dots,d_r$ by the last proposition. It follows from Theorem \ref{equiv} that $C^*C^{-1}$ is conjugate to the diagonal matrix with entries $d_1^{q-1},\dots,d_r^{q-1}$. Reversing the same argument gives the converse.
\end{proof}

\begin{example}
Continue the discussion in Example \ref{2d} but assume that $\mathrm N(a)=\mathrm N(b)\ne0$. For $L(x)=ax^{q^2}+bx$ with $L^*(x)=b^{q^3}x^{q^2}+a^qx$, $\ker L=\ker L^*$ if and only if $\frac ba=\frac{a^q}{b^{q^3}}$. Suppose that $a^{q+1}=b^{q^3+1}$. Then $\rad\sigma_L=\rad\sigma_L^*$, and $\sigma_L$ has a nonzero coefficient matrix
\[\begin{pmatrix}\mathrm{Tr}(\beta L(\beta^q))&0\\0&0\end{pmatrix}\]
for some $\beta\in\mathbb F_{q^4}$. The problem is then reduced to the $1$-dimensional case, where the equivalence class is determined by $\mathrm{Tr}(\beta L(\beta^q))^{q-1}$. Note that $(a^qb^{-1})^{q+1}=1$ and
\[\begin{split}&\mathrel{\phantom{=}}\mathrm{Tr}\big(a\beta^{q^3+1}+b\beta^{q+1}\big)^q-a^qb^{-1}\mathrm{Tr}\big(a\beta^{q^3+1}+b\beta^{q+1}\big)\\&=\mathrm{Tr}\big(a^q\beta^{q+1}+b^q\beta^{q^2+q}-a^{q+1}b^{-1}\beta^{q^3+1}-a^q\beta^{q+1}\big)\\&=\mathrm{Tr}\big(b^{q^3}\beta^{q^3+1}-a^{q+1}b^{-1}\beta^{q^3+1}\big)\\&=\mathrm{Tr}\big(\big(b^{q^3+1}-a^{q+1}\big)b^{-1}\beta^{q^3+1}\big)\\&=0,\end{split}\]
so $\mathrm{Tr}(\beta L(\beta^q))^{q-1}=a^qb^{-1}$.
Therefore, if $a^{q+1}=b^{q^3+1}$, then the equivalence class of $\sigma_L$ uniquely corresponds to $a^qb^{-1}$. Otherwise, with respect to a basis $\beta_1,\beta_2$, where $\beta_1\in\rad\sigma_L$ and $\beta_2\in\rad\sigma_L^*$, the sesquilinear form has a nonzero coefficient matrix
\[\begin{pmatrix}0&0\\\mathrm{Tr}(\beta_2L(\beta_1^q))&0\end{pmatrix};\]
multiplying $\beta_1$ by some nonzero element of $\mathbb F_{q^2}$ gives a coefficient matrix
\[\begin{pmatrix}0&0\\1&0\end{pmatrix}.\]
There is only one equivalence class in this case. Finally, we have classified all nonzero sesquilinear forms over finite fields in the $2$-dimensional case.
\end{example}

\section{An associated quadratic forms}

For the sesquilinear form $\sigma_L$ of $\mathbb F_{q^{2n}}/\mathbb F_{q^2}$, taking its trace over $\mathbb F_q$ one gets a bilinear form of $\mathbb F_{q^{2n}}/\mathbb F_q$. In particular, $\sigma_L(x,x)^q+\sigma_L(x,x)$ induces a quadratic form over $\mathbb F_q$, denoted by $\rho_L$. In general, quadratic forms have coefficient matrices in some canonical forms (see Theorem 6.21 in \cite{lidl1997} for odd $q$ and Theorem 6.30 there for even $q$), which lead to their classification. Briefly, for a quadratic form over $\mathbb F_q$ with $q$ odd, we usually need to find its nondegenerate part and calculate the determinant; for even $q$, the problem is more complicated, involving symplectic bases for alternating forms. However, we will see that, for a quadratic form induced by a sesquilinear form, the classification depends only on the rank of its coefficient matrix.

Let $\psi$ be the canonical additive character of $\mathbb F_q$ and define
\[\mathcal S(L)=\sum_{u\in\mathbb F_{q^{2n}}}\psi(\rho_L(u)).\]
With $\rho_L$ written in its canonical form, it follows directly that this sum determines the equivalence class of the quadratic form (except when $q$ is even and the sum turns out to be zero). Our goal it to evaluate $\mathcal S(L)$ explicitly.

\begin{lemma}\label{diagonal}
Every $\lambda$-Hermitian form of $\mathbb F_{q^{2n}}/\mathbb F_{q^2}$ for some $\lambda\in\mathbb F_{q^2}$ has a diagonal coefficient matrix whose nonzero entries are $d_1,\dots,d_r$, where, for $1\le i\le r$, $d_i$ is an arbitrary element in $\mathbb F_{q^2}$ satisfying $d_i^{q-1}=\lambda$.
\end{lemma}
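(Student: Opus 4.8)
The plan is to reduce everything to Corollary \ref{reduced}, whose hypotheses turn out to be automatically, and in fact trivially, satisfied for a $\lambda$-Hermitian form. First I would dispose of the degenerate case: if $\sigma$ is the zero form then $r=0$ and there is nothing to prove, so I assume $\sigma\ne0$. Writing $\sigma^*=\lambda\sigma$ and applying $*$ once more gives $\sigma=\sigma^{**}=\lambda^{q+1}\sigma$, whence $\lambda^{q+1}=1$; in particular $\lambda\ne0$, and since $\lambda^{q+1}=1$ is exactly the condition for $d^{q-1}=\lambda$ to be solvable in $\mathbb F_{q^2}$, elements $d$ with $d^{q-1}=\lambda$ do exist. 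This confirms that the statement is not vacuous.

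Next I would record the two structural consequences of the Hermitian relation. Because $\lambda\ne0$, the identity $\sigma^*=\lambda\sigma$ gives $\rad\sigma^*=\rad\sigma$ immediately from the definitions of the radicals. For the matrices, I note that for any basis $\beta_1,\dots,\beta_n$ the $(i,j)$ entry of the coefficient matrix of $\sigma^*$ is $\sigma^*(\beta_i,\beta_j)=\sigma(\beta_j,\beta_i)^q$, which is precisely the $(i,j)$ entry of $C^*$; hence the coefficient matrix of $\sigma^*$ equals $C^*$, and $\sigma^*=\lambda\sigma$ translates into $C^*=\lambda C$ for every coefficient matrix $C$ of $\sigma$.

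With these in hand the key computation is immediate. Passing to a reduced form of $\sigma$, which is again $\lambda$-Hermitian, its coefficient matrix $C$ is nonsingular (the hypothesis $\rad\sigma=\rad\sigma^*$ makes $C$ nonsingular, as observed in the proof of Corollary \ref{reduced}) and still satisfies $C^*=\lambda C$, so that $C^*C^{-1}=\lambda I_r$. Now given any prescribed $d_1,\dots,d_r\in\mathbb F_{q^2}$ with $d_i^{q-1}=\lambda$, the diagonal matrix with entries $d_1^{q-1},\dots,d_r^{q-1}$ is exactly $\lambda I_r$, which literally equals $C^*C^{-1}$ and is therefore trivially conjugate to it. Corollary \ref{reduced}, combined with $\rad\sigma=\rad\sigma^*$, then produces a diagonal coefficient matrix of $\sigma$ whose nonzero entries are exactly $d_1,\dots,d_r$, proving the claim for an arbitrary such choice.

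The argument is short because Corollary \ref{reduced} carries the weight, so the only thing to watch is that the conjugacy condition there collapses to the equality $\lambda I_r=\lambda I_r$ rather than demanding a genuine change of basis; that collapse is the whole point and the one step I would state explicitly. As a self-contained alternative avoiding Corollary \ref{reduced}, I could run a Gram--Schmidt diagonalization instead: a nonzero $\lambda$-Hermitian form is not alternating, so by the polarization identity used in the proof of Theorem \ref{rep} there is a $v$ with $\sigma(v,v)=d\ne0$, where $d^{q-1}=\lambda$ follows automatically from $\sigma(v,v)^q=\lambda\sigma(v,v)$; the relation $\sigma(v,w)^q=\lambda\sigma(w,v)$ shows that left- and right-orthogonality to $v$ coincide, so one splits off $\langle v\rangle$, induct on the dimension, and finally rescale each anisotropic basis vector $v_i\mapsto c_iv_i$, which multiplies $d_i$ by $c_i^{q+1}$; surjectivity of the norm $c\mapsto c^{q+1}$ then lets me hit any target value satisfying $d_i^{q-1}=\lambda$.
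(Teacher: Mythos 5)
Your proposal is correct and follows essentially the same route as the paper's (much terser) proof: reduce to Corollary \ref{reduced} via the observations $\lambda^{q+1}=1$, $\rad\sigma=\rad\sigma^*$, and $C^*=\lambda C$, so that $C^*C^{-1}=\lambda I_r$ coincides with the target diagonal matrix. The extra details you supply (solvability of $d^{q-1}=\lambda$, nonsingularity of $C$, the sketched Gram--Schmidt alternative) are all sound but not needed beyond what the paper records.
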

\begin{proof}
Given a $\lambda$-Hermitian form of $\mathbb F_{q^{2n}}/\mathbb F_{q^2}$, let $C$ be a coefficient matrix of its reduced form. It is clear that $\lambda^{q+1}=1$, $\rad\sigma=\rad\sigma^*$ and $C^*=\lambda C$. Considering $C^*C^{-1}$, we get the desired result by Corollary \ref{reduced}.
\end{proof}

\begin{theorem}
Let $r=n-\dim\ker(L^*+L)$. Then $\mathcal S(L)=(-1)^rq^{2n-r}$.
\end{theorem}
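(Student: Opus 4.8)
The plan is to reduce $\rho_L$ to a diagonal Hermitian form and then factor the character sum coordinatewise. The first step is to identify $\rho_L$ explicitly. Writing $\mathrm{Tr}$ for the trace of $\mathbb F_{q^{2n}}/\mathbb F_{q^2}$, the defining expression $\rho_L(x)=\sigma_L(x,x)^q+\sigma_L(x,x)$ combined with the adjoint identity $\mathrm{Tr}(xL^*(x^q))=\mathrm{Tr}(xL(x^q))^q$ (the special case $u=v=x$ of the preceding proposition) gives $\sigma_L(x,x)^q=\sigma_{L^*}(x,x)$, whence
\[\rho_L(x)=\mathrm{Tr}(xL(x^q))+\mathrm{Tr}(xL^*(x^q))=\mathrm{Tr}\big(x(L^*+L)(x^q)\big)=\sigma_{L^*+L}(x,x).\]
Since $(L^*+L)^*=L^{**}+L^*=L^*+L$, the form $\sigma_{L^*+L}$ satisfies $\sigma_{L^*+L}^*=\sigma_{L^*+L}$, so it is a $1$-Hermitian form of $\mathbb F_{q^{2n}}/\mathbb F_{q^2}$; in particular $\rho_L(x)=\sigma_{L^*+L}(x,x)\in\mathbb F_q$ for every $x$, as it must.

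Next I would diagonalize. By Lemma \ref{diagonal} applied with $\lambda=1$, there is a basis $\beta_1,\dots,\beta_n$ of $\mathbb F_{q^{2n}}/\mathbb F_{q^2}$ with respect to which $\sigma_{L^*+L}$ has a diagonal coefficient matrix whose nonzero entries $d_1,\dots,d_r$ satisfy $d_i^{q-1}=1$, that is $d_i\in\mathbb F_q^*$. The number of nonzero entries is $r=n-\dim\rad\sigma_{L^*+L}=n-\dim\ker(L^*+L)$, using that $\dim\rad\sigma_M^*=\dim\ker M$ and that $\rad=\rad^*$ for a Hermitian form, which matches the definition of $r$ in the statement. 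Hence, writing $x=\sum_{i=1}^nx_i\beta_i$ with $x_i\in\mathbb F_{q^2}$,
\[\rho_L(x)=\sum_{i=1}^rd_ix_ix_i^q=\sum_{i=1}^rd_i\mathrm N(x_i),\]
where $\mathrm N$ denotes the norm of $\mathbb F_{q^2}/\mathbb F_q$.

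The character sum then factors as a product over the coordinates $x_1,\dots,x_n$. The $n-r$ free coordinates with $i>r$ contribute $(q^2)^{n-r}$. For each $i\le r$ I would evaluate the norm exponential sum $\sum_{y\in\mathbb F_{q^2}}\psi(d_i\mathrm N(y))$ using that $\mathrm N$ maps $\mathbb F_{q^2}^*$ onto $\mathbb F_q^*$ with every fiber of size $q+1$: since $d_i\ne0$,
\[\sum_{y\in\mathbb F_{q^2}}\psi(d_i\mathrm N(y))=1+(q+1)\sum_{c\in\mathbb F_q^*}\psi(d_ic)=1-(q+1)=-q.\]
Multiplying the factors yields $\mathcal S(L)=(-q)^r(q^2)^{n-r}=(-1)^rq^{2n-r}$; the degenerate case $L^*+L\equiv0$ gives $r=0$ and $\mathcal S(L)=q^{2n}$, consistent with the formula.

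The substantive point, and the step most worth care, is the identification $\rho_L(x)=\sigma_{L^*+L}(x,x)$ with $\sigma_{L^*+L}$ Hermitian: this is exactly what collapses the usual case analysis for quadratic forms (discriminants for odd $q$, symplectic bases for even $q$) into the single invariant $r$, by forcing every diagonal entry into $\mathbb F_q^*$ and thereby turning each coordinate block into a scaled norm form with the uniform sum $-q$. Once this structural reduction is secured, the remaining evaluation is routine and characteristic-independent, which is why no Gauss-sum factor survives in the final answer.
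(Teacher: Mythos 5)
Your proof is correct and follows essentially the same route as the paper: rewrite $\rho_L(x)=\mathrm{Tr}(x(L^*+L)(x^q))$ via the adjoint identity, diagonalize the resulting $1$-Hermitian form $\sigma_{L^*+L}$ using Lemma \ref{diagonal}, and evaluate the character sum coordinatewise with each nonzero coordinate contributing $-q$. The only cosmetic difference is that the paper exploits the freedom in Lemma \ref{diagonal} to take every $d_i=1$, whereas you carry general $d_i\in\mathbb F_q^*$ through the norm sum; both yield $(-q)^r q^{2(n-r)}$.
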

\begin{proof}
Observe that
\[\sum_{t\in\mathbb F_{q^2}}\psi(t^{q+1})=1+\sum_{t\in\mathbb F_{q^2}^*}\psi(t^{q+1})=1+(q+1)\sum_{w\in\mathbb F_q^*}\psi(w)=-q,\]
and
\[\begin{split}\mathcal S(L)&=\sum_{u\in\mathbb F_{q^{2n}}}\psi(\mathrm{Tr}(uL(u^q))^q+\mathrm{Tr}(uL(u^q)))\\&=\sum_{u\in\mathbb F_{q^{2n}}}\psi(\mathrm{Tr}(uL^*(u^q)+uL(u^q)))\\&=\sum_{u\in\mathbb F_{q^{2n}}}\psi(\mathrm{Tr}(u(L^*+L)(u^q)))\\&=\sum_{u_1,\dots,u_n\in\mathbb F_{q^2}}\psi(u_1^{q+1}+\dots+u_r^{q+1}),\end{split}\]
where the last equality follows from Lemma \ref{diagonal}. Then $\mathcal S(L)=(-q)^rq^{2(n-r)}=(-1)^rq^{2n-r}$.
\end{proof}

As a consequence, $\mathcal S(L)\ne0$ even if $q$ is even, and then the quadratic form $\rho_L$ is characterized by $\dim\ker(L^*+L)$. Furthermore, for the canonical additive character $\chi$ of $\mathbb F_{q^2}$, we also have
\[\mathcal S(L)=\sum_{u\in\mathbb F_{q^{2n}}}\chi(\sigma_L(u,u)).\]
This is essential for the next section in terms of equations associated with sesquilinear forms.

\section{Equations from sesquilinear forms}

Given the sesquilinear form $\sigma_L$ for a $q^2$-linear polynomial $L$ over $\mathbb F_{q^{2n}}$, consider $N_c$, the number of zeros of $\sigma_L(x,x)+c$ in $\mathbb F_{q^{2n}}$ for $c\in\mathbb F_{q^2}$. The evaluation of $\mathcal S(L)$ allows us to determine the number $N_c$ for each $c\in\mathbb F_{q^2}$. In this section, let $U$ be the subgroup of order $q+1$ of $\mathbb F_{q^2}^*$ and let $R(\upsilon)=n-\dim\ker(\upsilon L^*+L)$ for $\upsilon\in U$. 

\begin{theorem}\label{number}
We have
\[N_0=q^{2(n-1)}+(q-1)q^{2(n-1)}\sum_{\upsilon\in U}(-q)^{-R(\upsilon)}\]
and
\[N_c=q^{2(n-1)}+q^{2n-1}(-q)^{-R(-c^{1-q})}-q^{2(n-1)}\sum_{\upsilon\in U}(-q)^{-R(\upsilon)}\]
for $c\in\mathbb F_{q^2}^*$.
\end{theorem}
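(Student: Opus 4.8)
The plan is to express $N_c$ as a character sum and reduce everything to the already-evaluated quantities $\mathcal S(\gamma L)$. By orthogonality of the additive characters of $\mathbb F_{q^2}$, I would write
\[N_c=\frac1{q^2}\sum_{u\in\mathbb F_{q^{2n}}}\sum_{\gamma\in\mathbb F_{q^2}}\chi(\gamma(\sigma_L(u,u)+c)),\]
where the $\gamma=0$ term contributes $q^{2(n-1)}$ in both cases. For $\gamma\ne0$ I would absorb the scalar into the form: since $\mathrm{Tr}$ is $\mathbb F_{q^2}$-linear, one has $\gamma\sigma_L(u,u)=\sigma_{\gamma L}(u,u)$, and hence $\sum_u\chi(\gamma\sigma_L(u,u))=\mathcal S(\gamma L)$ by the identity $\mathcal S(L)=\sum_u\chi(\sigma_L(u,u))$ recorded after the previous theorem.

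Next I would compute $\mathcal S(\gamma L)$ in terms of $R$. Using $(\gamma L)^*=\gamma^qL^*$ gives $(\gamma L)^*+\gamma L=\gamma(\gamma^{q-1}L^*+L)$, and scaling by the nonzero $\gamma$ leaves the kernel unchanged, so $n-\dim\ker((\gamma L)^*+\gamma L)=R(\gamma^{q-1})$. The previous theorem then yields $\mathcal S(\gamma L)=q^{2n}(-q)^{-R(\gamma^{q-1})}$, which depends on $\gamma$ only through $\upsilon=\gamma^{q-1}$. As $\gamma$ runs over $\mathbb F_{q^2}^*$, the power $\gamma^{q-1}$ runs over the subgroup $U$ of order $q+1$, each value attained exactly $q-1$ times, the fibres being the cosets of $\mathbb F_q^*=\ker(\gamma\mapsto\gamma^{q-1})$. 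For $c=0$ every $\chi(\gamma c)=1$, so this directly gives $N_0=q^{2(n-1)}+q^{-2}\sum_{\gamma\ne0}\mathcal S(\gamma L)=q^{2(n-1)}+(q-1)q^{2(n-1)}\sum_{\upsilon\in U}(-q)^{-R(\upsilon)}$.

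For $c\ne0$ the new ingredient is the fibre sum $\sum_{\gamma^{q-1}=\upsilon}\chi(\gamma c)$. Writing $\chi(\gamma c)=\psi(\mathrm{Tr}_{\mathbb F_{q^2}/\mathbb F_q}(\gamma c))$ and letting $\gamma$ range over a coset $\gamma_0\mathbb F_q^*$, this sum equals $q-1$ when $\mathrm{Tr}_{\mathbb F_{q^2}/\mathbb F_q}(\gamma_0c)=0$ and $-1$ otherwise; the vanishing condition $(\gamma_0c)^{q-1}=-1$ reads $\upsilon c^{q-1}=-1$, i.e. $\upsilon=-c^{1-q}$, which indeed lies in $U$ since $(-c^{1-q})^{q+1}=(-1)^{q+1}=1$. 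Thus exactly one $\upsilon$ carries weight $q-1$ and the others carry $-1$, so
\[\sum_{\gamma\ne0}\chi(\gamma c)\mathcal S(\gamma L)=q^{2n}\Big(q(-q)^{-R(-c^{1-q})}-\sum_{\upsilon\in U}(-q)^{-R(\upsilon)}\Big),\]
and multiplying by $q^{-2}$ yields the stated formula. I expect the main obstacle to be pinning down this fibre sum: one must check that the trace-vanishing condition is independent of the representative $\gamma_0$ (it scales by $\mathbb F_q^*$) and that it isolates the single exceptional value $\upsilon=-c^{1-q}$ in $U$. The remainder is bookkeeping with the multiplicity $q-1$ and the factor $q^{2n-2}=q^{2(n-1)}$.
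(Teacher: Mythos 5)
Your proposal is correct and follows essentially the same route as the paper's proof: orthogonality over the additive characters of $\mathbb F_{q^2}$, reduction of the nontrivial terms to $\mathcal S(tL)=q^{2n}(-q)^{-R(t^{q-1})}$ via $(tL)^*+tL=t(t^{q-1}L^*+L)$, and evaluation of the fibre sums $\sum_{t^{q-1}=\upsilon}\chi(ct)$ through the trace condition $c^qt^q+ct=0$, i.e.\ $\upsilon=-c^{1-q}$. The bookkeeping with multiplicity $q-1$ and the final formulas all match the paper.
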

\begin{proof}
By the orthogonality relations for characters, the number of zeros of $\mathrm{Tr}(xL(x^q))+c$ in $\mathbb F_{q^{2n}}$ is
\[\begin{split}&\mathrel{\phantom{=}}q^{-2}\sum_{u\in\mathbb F_{q^{2n}}}\sum_{t\in\mathbb F_{q^2}}\chi(t(\mathrm{Tr}(uL(u^q))+c))\\&=q^{2(n-1)}+q^{-2}\sum_{t\in\mathbb F_{q^2}^*}\mathcal S(tL)\chi(ct)\\&=q^{2(n-1)}+q^{-2}\sum_{\upsilon\in U}\sum_{t^{q-1}=\upsilon}\mathcal S(tL)\chi(ct).\end{split}\]
Here, $\mathcal S(tL)=(-q)^{-R(\upsilon)}q^{2n}$ for $\upsilon=t^{q-1}$ as
\[R(\upsilon)=n-\dim\ker(t^{q-1}L^*+L)=n-\dim\ker(t^qL^*+tL),\]
and
\[\sum_{w\in\mathbb F_q^*}\chi(ctw)=\sum_{w\in\mathbb F_q^*}\psi((c^qt^q+ct)w),\]
where $c^qt^q+ct=0$ if and only if $c=0$ or $t^{q-1}=-c^{1-q}$. Hence, 
\[\sum_{t^{q-1}=\upsilon}\chi(ct)=\begin{cases}q-1&\text{if }\upsilon=-c^{1-q},\\-1&\text{otherwise,}\end{cases}\]
when $c\ne0$. The result follows from a straightforward calculation.
\end{proof}

\begin{corollary}
If $\sigma_L$ is $\lambda$-Hermitian for some $\lambda\in\mathbb F_{q^2}$, then, for $r=n-\dim\ker L$,
\[N_c=\begin{cases}q^{2n-1}+(-1)^r(q-1)q^{2n-r-1}&\text{if }c=0,\\q^{2n-1}+(-1)^{r-1}q^{2n-r-1}&\text{if }c^{q-1}=\lambda,\\0&\text{otherwise.}\end{cases}\]
\end{corollary}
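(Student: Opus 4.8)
The plan is to specialize the two general formulas of Theorem \ref{number} to the $\lambda$-Hermitian case, and the only real work is to understand how the quantity $R(\upsilon)=n-\dim\ker(\upsilon L^*+L)$ behaves as $\upsilon$ ranges over $U$. The crucial reduction is that the $\lambda$-Hermitian hypothesis $\sigma_L^*=\lambda\sigma_L$ translates, via $\sigma_L^*=\sigma_{L^*}$ and the uniqueness in Theorem \ref{rep}, into the polynomial identity $L^*=\lambda L$; moreover, applying $*$ twice and using $(\alpha L)^*=\alpha^qL^*$ forces $\lambda^{q+1}=1$, so $\lambda\in U$ and $\lambda^q=\lambda^{-1}$. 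Consequently $\upsilon L^*+L=(\upsilon\lambda+1)L$ is always a scalar multiple of $L$, which collapses the computation of $R(\upsilon)$ to two cases.

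First I would evaluate $R(\upsilon)$. When $\upsilon\lambda+1\ne0$ the operator $(\upsilon\lambda+1)L$ has the same kernel as $L$, so $R(\upsilon)=n-\dim\ker L=r$; when $\upsilon\lambda+1=0$, that is $\upsilon=-\lambda^q$, the operator vanishes identically and $R(\upsilon)=0$. Since $-\lambda^q$ is the unique element $\upsilon_0\in U$ with $\upsilon_0\lambda+1=0$, exactly $q$ of the $q+1$ elements of $U$ give $R(\upsilon)=r$ and one gives $R(\upsilon)=0$, whence
\[\sum_{\upsilon\in U}(-q)^{-R(\upsilon)}=1+q(-q)^{-r}.\]

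Next I would substitute into Theorem \ref{number}. For $N_0$ the displayed sum gives $N_0=q^{2n-1}+(q-1)q^{2n-1}(-q)^{-r}$, and rewriting $(-q)^{-r}=(-1)^rq^{-r}$ produces the stated value $q^{2n-1}+(-1)^r(q-1)q^{2n-r-1}$. For $c\in\mathbb F_{q^2}^*$ the decisive point is to identify the distinguished term $(-q)^{-R(-c^{1-q})}$: one checks $-c^{1-q}\in U$, and $-c^{1-q}=\upsilon_0=-\lambda^q$ holds exactly when $c^{q-1}=\lambda$ (using $\lambda^q=\lambda^{-1}$). Thus $R(-c^{1-q})=0$ precisely when $c^{q-1}=\lambda$ and $R(-c^{1-q})=r$ otherwise. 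Feeding these two subcases together with the displayed sum into the formula for $N_c$ and simplifying yields $q^{2n-1}+(-1)^{r-1}q^{2n-r-1}$ when $c^{q-1}=\lambda$, and a complete cancellation to $0$ in the remaining case. The only genuine subtlety is the bookkeeping that matches the algebraic condition $-c^{1-q}=\upsilon_0$ with the clean criterion $c^{q-1}=\lambda$; once that equivalence is settled, the three cases follow by direct arithmetic.
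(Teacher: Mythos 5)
Your proposal is correct and follows essentially the same route as the paper: reduce $\upsilon L^*+L$ to $(\upsilon\lambda+1)L$ via the Hermitian condition, observe that $R(\upsilon)$ takes the value $0$ for the single $\upsilon=-\lambda^q$ and $r$ for the other $q$ elements of $U$, compute $\sum_{\upsilon\in U}(-q)^{-R(\upsilon)}=1+q(-q)^{-r}$, and substitute into Theorem \ref{number}. You are somewhat more explicit than the paper in justifying $L^*=\lambda L$, $\lambda^{q+1}=1$, and the equivalence of $-c^{1-q}=-\lambda^q$ with $c^{q-1}=\lambda$, but the argument is the same.
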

\begin{proof}
Note that $\sum_{\upsilon\in U}(-q)^{-R(\upsilon)}=1+q(-q)^{-r}=1+(-1)^rq^{1-r}$ as
\[\dim\ker(\upsilon L^*+L)=\dim\ker((\upsilon\lambda+1)L)=\begin{cases}n&\text{if }\upsilon\lambda+1=0,\\n-r&\text{otherwise.}\end{cases}\]
Thus,
\[N_0=q^{2(n-1)}+(q-1)q^{2(n-1)}(1+(-1)^rq^{1-r})=q^{2n-1}+(-1)^r(q-1)q^{2n-r-1}.\]
For $c\in\mathbb F_{q^2}^*$, if $c^{q-1}=\lambda$, then $R(-c^{1-q})=0$ and 
\[N_c=q^{2(n-1)}+q^{2n-1}-q^{2(n-1)}(1+(-1)^rq^{1-r})=q^{2n-1}+(-1)^{r-1}q^{2n-r-1};\]
otherwise, $R(-c^{1-q})=r$ and
\[N_c=q^{2(n-1)}+q^{2n-1}(-q)^{-r}-q^{2(n-1)}(1+(-1)^rq^{1-r})=0,\]
as desired.
\end{proof}

\begin{theorem}\label{bound}
Let $L$ have degree $q^{2m}$ for some nonnegative integer $m$. Then $|N_c-q^{2(n-1)}|\le(q^2-1)q^{n+2m-1}$, and the equality holds if and only if $c=0$ and $\dim\ker(\upsilon L^*+L)=2m+1$ for every $\upsilon\in U$. In this case, $N_0-q^{2(n-1)}=(-1)^{n-1}(q^2-1)q^{n+2m-1}$.
\end{theorem}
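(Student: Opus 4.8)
The plan is to reduce both the inequality and the description of its equality case to a single estimate on the numbers $R(\upsilon)$, and then to read everything off from the closed formulas of Theorem \ref{number}. The crucial input is the bound
\[\dim\ker(\upsilon L^*+L)\le 2m+1,\qquad\text{equivalently}\qquad R(\upsilon)\ge n-2m-1,\]
valid for every $\upsilon\in U$. Granting this, every summand satisfies $|(-q)^{-R(\upsilon)}|=q^{-R(\upsilon)}\le q^{2m+1-n}$, and the two cases of the theorem will follow by the triangle inequality together with a careful bookkeeping of signs.

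To prove the kernel bound I would argue with linearized polynomials. Writing $L(x)=\sum_{i=0}^{m}a_ix^{q^{2i}}$ with $a_m\ne0$, the polynomial $L^*$ contributes only terms $x^{q^{2(n-i-1)}}$ with $0\le i\le m$. Regarding $x\mapsto x^{q^2}$ as an invertible operator $F$ with $F^n=\mathrm{id}$, the support of $\upsilon L^*+L$ in $\mathbb Z/n\mathbb Z$ therefore lies in the arc $\{n-m-1,\dots,n-1\}\cup\{0,\dots,m\}$ of $2m+2$ consecutive residues. Composing with the invertible $F^{m+1}$ leaves the kernel unchanged and shifts this support into $\{0,1,\dots,2m+1\}$; the shifted linearized polynomial has nonzero constant term and nonzero coefficient in degree $2m+1$, both arising from the leading coefficient $a_m$ of $L$. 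Since a linearized polynomial with nonzero constant term is separable, its kernel over the algebraic closure has $\mathbb F_{q^2}$-dimension equal to its $F$-degree, so restricting to $\mathbb F_{q^{2n}}$ gives $\dim\ker(\upsilon L^*+L)\le 2m+1$. (When $2m+1\ge n$ the bound is trivial, as any kernel has dimension at most $n$.)

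For $c=0$, Theorem \ref{number} gives $|N_0-q^{2(n-1)}|=(q-1)q^{2(n-1)}\big|\sum_{\upsilon\in U}(-q)^{-R(\upsilon)}\big|$, which the estimate above bounds by $(q-1)q^{2(n-1)}(q+1)q^{2m+1-n}=(q^2-1)q^{n+2m-1}$. Equality forces every term to attain its maximal modulus and to share a common sign, which happens exactly when $R(\upsilon)=n-2m-1$ for all $\upsilon$, that is, $\dim\ker(\upsilon L^*+L)=2m+1$; since then $(-q)^{-R(\upsilon)}=(-1)^{n-1}q^{2m+1-n}$, summing over the $q+1$ elements of $U$ yields $N_0-q^{2(n-1)}=(-1)^{n-1}(q^2-1)q^{n+2m-1}$, as claimed.

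For $c\ne0$ I would set $\upsilon_0=-c^{1-q}\in U$ and regroup the formula into $N_c-q^{2(n-1)}=q^{2(n-1)}\big[(q-1)(-q)^{-R(\upsilon_0)}-\sum_{\upsilon\ne\upsilon_0}(-q)^{-R(\upsilon)}\big]$, which the triangle inequality bounds by $(2q-1)q^{n+2m-1}$. As $2q-1<q^2-1$ for $q\ge3$, the inequality is strict there; the delicate point, and the only place where the argument is not purely formal, is $q=2$, where $2q-1=q^2-1$. Here equality in the triangle inequality would require all $R(\upsilon)=n-2m-1$ together with all terms of one sign, but with $R(\upsilon)=n-2m-1$ the $\upsilon_0$-term has sign $(-1)^{n-1}$ while the remaining terms have sign $(-1)^{n}$, so the cancellation makes the inequality strict. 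Thus equality never occurs for $c\ne0$, completing the proof. The main obstacle is the kernel bound; once it is in place the rest is bookkeeping, with the $q=2$ sign analysis the only subtle step.
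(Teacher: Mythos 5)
Your proof is correct, and its backbone is the same as the paper's: the key estimate $\dim\ker(\upsilon L^*+L)\le 2m+1$ is obtained exactly as in the paper, by composing with the $(m+1)$-st power of the $q^2$-Frobenius so that the resulting linearized polynomial has $F$-degree $2m+1$ with nonzero extreme coefficients coming from $a_m$, and your treatment of $c=0$ coincides with the paper's. The only divergence is the case $c\ne0$: the paper applies the triangle inequality once to the single sum $q^2\big(N_c-q^{2(n-1)}\big)=\sum_{t\in\mathbb F_{q^2}^*}\mathcal S(tL)\chi(ct)$, where each of the $q^2-1$ summands has modulus at most $q^{n+2m+1}$, so the bound $(q^2-1)q^{n+2m-1}$ and the equality condition (all summands of equal argument, forcing $\chi(ct)$ constant and hence $c=0$) fall out uniformly for every $c$; you instead work from the already-grouped formulas of Theorem \ref{number}, which gives the interim bound $(2q-1)q^{n+2m-1}$ and therefore requires the separate sign analysis at $q=2$, where $2q-1=q^2-1$. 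Both routes are valid; the paper's is marginally cleaner because the equality case is settled in one stroke, while yours has the merit of making explicit that for $c\ne 0$ the $\upsilon_0$-term and the remaining terms carry opposite signs, so the apparent near-equality at $q=2$ is destroyed by cancellation.
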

\begin{proof}
Recall that if $L(x)=\sum_{i=0}^ma_ix^{q^{2i}}$, then
\[\upsilon L^*(x)+L(x)=\upsilon\sum_{i=0}^m(a_i^qx)^{q^{2(n-i-1)}}+\sum_{i=0}^ma_ix^{q^i}\]
for $\upsilon\in U$, and
\[(\upsilon L^*(x)+L(x))^{q^{2(m+1)}}\equiv\upsilon\sum_{i=0}^m(a_i^qx)^{q^{2(m-i)}}+\sum_{i=0}^ma_i^{q^{2(m+1)}}x^{q^{2(m+i+1)}}\pmod{x^{q^{2n}}-x}.\]
This implies $\dim\ker(\upsilon L^*+L)\le2m+1$. Furthermore, as in the proof of Theorem \ref{number},
\[q^2\big(N_c-q^{2(n-1)}\big)=\sum_{t\in\mathbb F_{q^2}^*}\mathcal S(tL)\chi(ct),\]
and
\[\Bigg|\sum_{t\in\mathbb F_{q^2}^*}\mathcal S(tL)\chi(ct)\Bigg|\le\sum_{t\in\mathbb F_{q^2}^*}|\mathcal S(tL)\chi(ct)|=\sum_{t\in\mathbb F_{q^2}^*}|q^{2n-R(t^{q-1})}|\le\sum_{t\in\mathbb F_{q^2}^*}q^{n+2m+1}.\]
Clearly, $|N_c-q^{2(n-1)}|\le(q^2-1)q^{n+2m-1}$ and the equality holds if and only if $R(t^{q-1})=n-2m-1$ for all $t\in\mathbb F_{q^2}^*$, and the complex argument of $\mathcal S(tL)\chi(ct)$ is a constant independent of $t$; i.e., $\dim\ker(\upsilon L^*+L)=2m+1$ for all $\upsilon\in U$ and $c=0$. Here, the sign of $N_0-q^{2(n-1)}$ is $(-1)^{n-2m-1}=(-1)^{n-1}$.
\end{proof}

Now we investigate the Artin-Schreier curve defined by the polynomial $y^{q^2}-y-xL(x^q)$ in $\mathbb F_{q^{2n}}[x,y]$. When $L$ has degree $q^{2m}$, the curve has genus $\frac12(q^2-1)q^{2m+1}$ (see, e.g., \cite[Proposition 3.7.10]{stichtenoth2009}), and the number $N$ of its rational points over $\mathbb F_{q^{2n}}$ satisfies
\[q^{2n}+1-(q^2-1)q^{n+2m+1}\le N\le q^{2n}+1+(q^2-1)q^{n+2m+1},\]
by the Hasse-Weil Theorem. The curve is called maximal if $N$ achieves the upper bound, and called minimal if it achieves the lower bound. Since the curve has exactly one point at infinity, and every zero of $\mathrm{Tr}(xL(x^q))$ in $\mathbb F_{q^{2n}}$ corresponds to $q^2$ rational points of it, the preceding theorem indeed characterizes all those maximal or minimal Artin-Schreier curves, and whether it is maximal or minimal depends only on $(-1)^{n-1}$. In what follows, we give some explicit instances.

\begin{proposition}
Let $L(x)=ax^{q^{2m}}$ for $a\in\mathbb F_{q^{2n}}^*$ and a nonnegative integer $m$. Let $d=\gcd(2m+1,n)$, $\alpha=\big(-a^{q^{2m+1}-1}\big)^\frac{q^{2n}-1}{q^{2d}-1}$ and $Q=q+1-\gcd(n/d,q+1)(q^d+1)$. If $\alpha^\frac{q+1}{\gcd(n/d,q+1)}=1$, then
\[N_c-q^{2(n-1)}=\begin{cases}(-1)^n(q-1)q^{n-2}Q&\text{if }c=0,\\(-1)^{n-1}q^{n+d-1}+(-1)^{n-1}q^{n-2}Q&\text{if }(-c^{q-1})^\frac nd=\alpha,\\(-1)^nq^{n-1}+(-1)^{n-1}q^{n-2}Q&\text{otherwise.}\end{cases}\]
If $\alpha^\frac{q+1}{\gcd(n/d,q+1)}\ne1$, then
\[N_c-q^{2(n-1)}=\begin{cases}(-1)^n(q^2-1)q^{n-2}&\text{if }c=0,\\(-1)^{n-1}q^{n-2}&\text{otherwise.}\end{cases}\]
In particular, $|N_0-q^{2(n-1)}|=(q^2-1)q^{n+2m-1}$ if and only if $(2m+1)(q+1)$ divides $n$ and $\alpha=1$.
\end{proposition}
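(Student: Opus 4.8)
The plan is to reduce everything to computing $R(\upsilon)=n-\dim\ker(\upsilon L^*+L)$ for each $\upsilon\in U$ and then feeding the result into Theorem \ref{number}. For the monomial $L(x)=ax^{q^{2m}}$ one has $L^*(x)=a^{q^{2n-2m-1}}x^{q^{2n-2m-2}}$, so $\upsilon L^*+L$ is a binomial additive polynomial. First I would observe that its kernel is an $\mathbb F_{q^2}$-subspace, and that for $x\ne0$ the equation $\upsilon L^*(x)+L(x)=0$ is equivalent to $x^{e}=c_0$, where $e=q^{2m}-q^{2n-2m-2}$ and $c_0=-\upsilon a^{q^{2n-2m-1}-1}$. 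Hence the number of nonzero kernel elements is either $0$ or $\gcd(e,q^{2n}-1)$. Using $\gcd(q^a-1,q^b-1)=q^{\gcd(a,b)}-1$ together with $\gcd(2n-4m-2,2n)=\gcd(4m+2,2n)=2d$, I would compute $\gcd(e,q^{2n}-1)=q^{2d}-1$, so that $\dim\ker\in\{0,d\}$ and $R(\upsilon)\in\{n,n-d\}$, with $R(\upsilon)=n-d$ precisely when $c_0$ is an $e$-th power, i.e. when $c_0^{E}=1$ for $E=(q^{2n}-1)/(q^{2d}-1)$.

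The crux is to rewrite $c_0^E=1$ as a clean condition on $\upsilon$. Since raising to the power $q^{2m+1}$ is a bijection of $\mathbb F_{q^{2n}}^*$, I would pass to $(c_0^{q^{2m+1}})^E=1$. Here $\upsilon^{q^{2m+1}}=\upsilon^{-1}$ because $q^{2m+1}\equiv-1\pmod{q+1}$ for $\upsilon\in U$, and $(a^{q^{2n-2m-1}-1})^{q^{2m+1}}=a^{q^{2n}-q^{2m+1}}=a^{1-q^{2m+1}}=(a^{q^{2m+1}-1})^{-1}$ using $a^{q^{2n}}=a$. Combining these with $(-1)^E=(-1)^{-E}$ turns the condition into $\upsilon^{-E}=(-1)^E(a^{q^{2m+1}-1})^E=\alpha$, that is $\upsilon^{E}=\alpha^{-1}$; and since $E\equiv n/d\pmod{q+1}$, on $U$ this reads $\upsilon^{n/d}=\alpha^{-1}$. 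I expect this step — the Frobenius twist that moves the exponent of $a$ from $q^{2n-2m-1}-1$ to $q^{2m+1}-1$, together with the sign and root-of-unity bookkeeping — to be the main obstacle.

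With $R(\upsilon)$ determined, I would count $k=\#\{\upsilon\in U:\upsilon^{n/d}=\alpha^{-1}\}$. The map $\upsilon\mapsto\upsilon^{n/d}$ on the cyclic group $U$ of order $q+1$ has image $\{y:y^{(q+1)/g}=1\}$ with $g=\gcd(n/d,q+1)$; thus $\alpha^{-1}$ lies in the image if and only if $\alpha^{(q+1)/g}=1$ (which also forces $\alpha\in U$), giving $k=g$, and otherwise $k=0$. This explains the dichotomy in the statement. Then $\sum_{\upsilon\in U}(-q)^{-R(\upsilon)}=(-1)^nq^{-n}\big(k(-q)^d+q+1-k\big)$, and here I would use that $d=\gcd(2m+1,n)$ is odd, being a divisor of the odd number $2m+1$, so that $(-q)^d=-q^d$ and the bracket becomes $q+1-g(q^d+1)=Q$ in the first case and $q+1$ in the second.

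It then remains to assemble $N_c$ from Theorem \ref{number}. For $c=0$ this is immediate after substitution; for $c\ne0$ I would note $-c^{1-q}\in U$ and that $R(-c^{1-q})=n-d$ holds exactly when $(-c^{1-q})^{n/d}=\alpha^{-1}$, which rearranges to $(-c^{q-1})^{n/d}=\alpha$ because $(-c^{1-q})^{n/d}$ and $(-c^{q-1})^{n/d}$ are mutually inverse; substituting the two possible values of $R(-c^{1-q})$ reproduces the three (respectively two) displayed cases after routine simplification of the powers of $-q$. For the final assertion I would invoke Theorem \ref{bound}: the extremal value $|N_0-q^{2(n-1)}|=(q^2-1)q^{n+2m-1}$ occurs if and only if $c=0$ and $\dim\ker(\upsilon L^*+L)=2m+1$ for every $\upsilon\in U$. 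Since $\dim\ker\in\{0,d\}$, this forces $d=2m+1$ (equivalently $(2m+1)\mid n$) and that every $\upsilon\in U$ satisfies $\upsilon^{n/d}=\alpha^{-1}$; the latter holds for all $q+1$ elements only when the map $\upsilon\mapsto\upsilon^{n/d}$ is trivial, i.e. $(q+1)\mid(n/d)$, and $\alpha=1$. Together these say exactly that $(2m+1)(q+1)\mid n$ and $\alpha=1$, as claimed.
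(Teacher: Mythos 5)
Your proposal is correct and follows essentially the same route as the paper: determine $\dim\ker(\upsilon L^*+L)\in\{0,d\}$ via a gcd computation, translate the solvability condition into $\upsilon^{n/d}=\alpha^{-1}$ on $U$, count the relevant $\upsilon$ using $\gcd(n/d,q+1)$, and substitute into Theorems \ref{number} and \ref{bound}. The only (immaterial) difference is that the paper first raises $\upsilon L^*(x)+L(x)$ to the power $q^{2(m+1)}$ to normalize the binomial, whereas you solve $x^e=c_0$ directly and apply the Frobenius twist to the constant; your explicit remark that $d$ is odd is a point the paper uses silently.
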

\begin{proof}
For $L^*(x)=a^{q^{2(n-m)-1}}x^{q^{2(n-m-1)}}$ and $\upsilon\in U$, we have
\[(\upsilon L^*(x)+L(x))^{q^{2(m+1)}}=\upsilon a^qx^{q^{2n}}+a^{q^{2(m+1)}}x^{q^{2(2m+1)}}.\]
Note that $\gcd(q^{2(2m+1)}-1,q^{2n}-1)=q^{2d}-1$, and thus
\[\dim\ker(\upsilon L^*+L)=\begin{cases}d&\text{if }\big(-\upsilon^qa^{1-q^{2m+1}}\big)^\frac{q^{2n}-1}{q^{2d}-1}=1,\\0&\text{otherwise,}\end{cases}\]
where $\big(-\upsilon^qa^{1-q^{2m+1}}\big)^\frac{q^{2n}-1}{q^{2d}-1}=\upsilon^{q\frac nd}\alpha^{-1}$. If $\upsilon$ has order $q+1$, then $\upsilon^{q\frac nd}$ has order $\frac{q+1}{\gcd(n/d,q+1)}$. Thus, there exists some $\upsilon\in U$ such that $\dim\ker(\upsilon L^*+L)=d$ if and only if $\alpha^\frac{q+1}{\gcd(n/d,q+1)}=1$; in this case, the number of such $\upsilon$ is $\gcd(n/d,q+1)$. This, combined with Theorem \ref{bound}, proves the statement on the upper bound of $|N_0-q^{2(n-1)}|$.

Suppose $\alpha^\frac{q+1}{\gcd(n/d,q+1)}=1$. Then
\[\begin{split}\sum_{\upsilon\in U}(-q)^{-R(\upsilon)}&=\gcd(n/d,q+1)(-q)^{d-n}+(q+1-\gcd(n/d,q+1))(-q)^{-n}\\&=(-q)^{-n}(q+1-\gcd(n/d,q+1)(q^d+1))\\&=(-q)^{-n}Q.\end{split}\]
Accordingly,
\[N_0=q^{2(n-1)}+(q-1)q^{2(n-1)}(-q)^{-n}Q=q^{2(n-1)}+(-1)^n(q-1)q^{n-2}Q,\]
and
\[N_c=q^{2(n-1)}+q^{2n-1}(-q)^{-R(-c^{1-q})}-q^{2(n-1)}(-q)^{-n}Q\]
for $c\in\mathbb F_{q^2}^*$, where
\[R(-c^{1-q})=\begin{cases}n-d&\text{if }(-c^{q-1})^\frac nd=\alpha,\\n&\text{otherwise,}\end{cases}\]
as desired.

If $\alpha^\frac{q+1}{\gcd(n/d,q+1)}\ne1$, then $R(\upsilon)=n$ for all $\upsilon\in U$, and thus
\[N_0=q^{2(n-1)}+(q-1)q^{2(n-1)}(q+1)(-q)^{-n}=q^{2(n-1)}+(-1)^n(q^2-1)q^{n-2},\]
with
\[N_c=q^{2(n-1)}+q^{2n-1}(-q)^{-n}-q^{2(n-1)}(q+1)(-q)^{-n}=q^{2(n-1)}+(-1)^{n-1}q^{n-2}\]
for $c\in\mathbb F_{q^2}^*$.
\end{proof} 

\begin{proposition}
Let $L(x)=\gamma^{q^{2m+1}+1}\delta x^{q^{2m}}-\gamma^{q^{2l+1}+1}\delta x^{q^{2l}}$ for $\gamma\in\mathbb F_{q^{2n}}^*$, $\delta\in\mathbb F_{q^{2(m-l)}}^*$, and integers $m,l$ with $0\le l<m$. Let $d=\gcd(m-l,m+l+1)$ and $s=\frac{q^{2(m-l)}-1}{q^d+1}\frac{q+1}{\gcd((m-l)/d,q+1)}$. Suppose that $n=k\lcm(m-l,(m+l+1)(q+1))$ for some positive integer $k$, and $\delta^{sk}=1$ with either $\delta^s\ne1$ or $\gcd(k,q)\ne1$. Then $N_0-q^{2(n-1)}=(-1)^{n-1}(q^2-1)q^{n+2m-1}$.
\end{proposition}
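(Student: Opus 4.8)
The plan is to strip off the twist by $\gamma$ and reduce to a polynomial defined over $\mathbb F_{q^{2(m-l)}}$. Writing $L(v^q)=\gamma^{q^{2m+1}+1}\delta v^{q^{2m+1}}-\gamma^{q^{2l+1}+1}\delta v^{q^{2l+1}}$, one checks directly that $uL(v^q)=(\gamma u)\hat L((\gamma v)^q)$ for $\hat L(x)=\delta x^{q^{2m}}-\delta x^{q^{2l}}$, whence $\sigma_L(u,v)=\sigma_{\hat L}(\gamma u,\gamma v)$. Since multiplication by $\gamma$ is an $\mathbb F_{q^2}$-linear automorphism of $\mathbb F_{q^{2n}}$, the forms $\sigma_L$ and $\sigma_{\hat L}$ are equivalent, hence share the same $N_0$, and $\hat L$ still has degree $q^{2m}$. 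By Theorem \ref{bound} it therefore suffices to prove that $\dim\ker(\upsilon\hat L^*+\hat L)=2m+1$ for every $\upsilon\in U$, for then $N_0-q^{2(n-1)}=(-1)^{n-1}(q^2-1)q^{n+2m-1}$.

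Next I would linearize. Raising $\upsilon\hat L^*+\hat L$ to the power $q^{2(m+1)}$ is a Frobenius substitution and so preserves the kernel on $\mathbb F_{q^{2n}}$; reducing modulo $x^{q^{2n}}-x$ and using $\delta\in\mathbb F_{q^{2(m-l)}}$ together with the identity $x^{q^{2(2m+1)}}-x^{q^{2(m+l+1)}}=(x^{q^{2(m-l)}}-x)^{q^{2(m+l+1)}}$, the operator factors as $Q\circ\phi$, where $\phi(x)=x^{q^{2(m-l)}}-x$ and $Q(w)=\delta^{q^{2(m+1)}}w^{q^{2(m+l+1)}}-\upsilon\delta^q w$. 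Because $(m-l)\mid n$, the map $\phi$ has kernel $\mathbb F_{q^{2(m-l)}}$ of dimension $m-l$ and image $\ker\mathrm{Tr}_{\mathbb F_{q^{2n}}/\mathbb F_{q^{2(m-l)}}}$, so $\dim\ker(\upsilon\hat L^*+\hat L)=(m-l)+\dim(\ker Q\cap\im\phi)$. As $Q$ has $q^2$-degree $m+l+1$, its kernel has dimension at most $m+l+1$, so the target value $2m+1$ is attained exactly when $\dim\ker Q=m+l+1$ and $\ker Q\subseteq\im\phi$.

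I would verify these two conditions separately. For the first, $(m+l+1)\mid n$ gives $\gcd(q^{2(m+l+1)}-1,q^{2n}-1)=q^{2(m+l+1)}-1$, so $\ker Q$ has full dimension $m+l+1$ iff $w^{q^{2(m+l+1)}-1}=\upsilon\delta^{q-q^{2(m+1)}}$ is solvable in $\mathbb F_{q^{2n}}$, i.e. iff $(\upsilon\delta^{q-q^{2(m+1)}})^{(q^{2n}-1)/(q^{2(m+l+1)}-1)}=1$. Here $(m+l+1)(q+1)\mid n$ forces $(q+1)\mid n/(m+l+1)$, which annihilates the $\upsilon$-part for every $\upsilon\in U$ and reduces the requirement to a condition on $\delta$ that $\delta^{sk}=1$ (with $n=k\lcm(m-l,(m+l+1)(q+1))$) is designed to meet. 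For the second, fix a solution $w_0$, so that $\ker Q=w_0\mathbb F_{q^{2(m+l+1)}}$; expanding $\mathrm{Tr}_{\mathbb F_{q^{2n}}/\mathbb F_{q^{2(m-l)}}}(w_0\zeta)$ for $\zeta\in\mathbb F_{q^{2(m+l+1)}}$ through the compositum $\mathbb F_{q^{2\lcm(m-l,m+l+1)}}$ and invoking linear independence of the distinct embeddings of $\mathbb F_{q^{2(m+l+1)}}$ reduces $\ker Q\subseteq\im\phi$ to the single scalar condition $\mathrm{Tr}_{\mathbb F_{q^{2n}}/\mathbb F_{q^{2\lcm(m-l,m+l+1)}}}(w_0)=0$.

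The hard part is evaluating this last trace and reconciling everything across all $q+1$ choices of $\upsilon$. Whether $w_0$ may be chosen inside $\mathbb F_{q^{2\lcm(m-l,m+l+1)}}$ is controlled by $d=\gcd(m-l,m+l+1)$ and the factor $(q+1)/\gcd((m-l)/d,q+1)$ appearing in $s$; for the exceptional $\upsilon$ where it can, the trace collapses to $(n/\lcm(m-l,m+l+1))\,w_0$ and its vanishing becomes equivalent to the characteristic of $\mathbb F_q$ dividing $n/\lcm(m-l,m+l+1)$, whereas for the remaining $\upsilon$ it is a twisted sum that I would show vanishes using $\delta^{sk}=1$ and $(q+1)\mid n/(m+l+1)$. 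The dichotomy ``$\delta^s\ne1$ or $\gcd(k,q)\ne1$'' is precisely what covers the exceptional $\upsilon$: the first alternative keeps $w_0$ out of that subfield, and the second forces the characteristic to divide $n/\lcm(m-l,m+l+1)$. Carrying out this case analysis and the attendant gcd/lcm bookkeeping is the principal obstacle; once it is settled, $R(\upsilon)=n-(2m+1)$ for all $\upsilon\in U$, and Theorem \ref{bound} delivers $N_0-q^{2(n-1)}=(-1)^{n-1}(q^2-1)q^{n+2m-1}$.
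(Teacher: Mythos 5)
Your overall strategy coincides with the paper's. Stripping the $\gamma$-twist via $\sigma_L(u,v)=\sigma_{\hat L}(\gamma u,\gamma v)$ is a clean normalization (the paper instead keeps $\gamma$ and writes $\upsilon L^*+L=L_0\circ L$ with $L$ itself as the inner map, $\ker L$ of dimension $m-l$ and $\im L=\gamma\ker\mathrm{Tr}_{m-l}$), and your factorization $(\upsilon\hat L^*+\hat L)^{q^{2(m+1)}}=Q\circ\phi$ with $\phi(x)=x^{q^{2(m-l)}}-x$ is the untwisted version of exactly that decomposition. The reduction to the two conditions $\dim\ker Q=m+l+1$ and $\ker Q=w_0\mathbb F_{q^{2(m+l+1)}}\subseteq\im\phi$, the further reduction of the second to the vanishing of $\mathrm{Tr}_{\mathbb F_{q^{2n}}/\mathbb F_{q^{2e}}}(w_0)$ with $e=\lcm(m-l,m+l+1)$, and the dichotomy between $w_0\notin\mathbb F_{q^{2e}}$ (geometric series telescopes) and $w_0\in\mathbb F_{q^{2e}}$ (trace collapses to $(n/e)w_0$, needing $p\mid n/e$, i.e.\ $\gcd(k,q)\ne1$) all match the paper's argument, where the controlling quantity is $\theta=w_0^{q^{2e}-1}=(\upsilon\delta^{q-q^{2(m+1)}})^{(q^{2e}-1)/(q^{2(m+l+1)}-1)}$ and the two branches are $\theta\ne1$ versus $\theta=1$.

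The gap is that everything you defer as ``gcd/lcm bookkeeping'' is where the proof actually lives, and you only assert -- rather than prove -- that the hypotheses deliver the three facts you need: (a) $(\upsilon\delta^{q-q^{2(m+1)}})^{(q^{2n}-1)/(q^{2(m+l+1)}-1)}=1$ for \emph{every} $\upsilon\in U$ (solvability for $w_0$); (b) $\theta^{n/e}=1$ for every $\upsilon$ (so the geometric sum telescopes to $0$ when $\theta\ne1$); and (c) $\delta^s\ne1$ forces $\theta\ne1$ for every $\upsilon$. For (a) and (b) the $\upsilon$-part dies because $(q+1)\mid n/(m+l+1)$, but the $\delta$-part requires showing that the subgroup of $\mathbb F_{q^{2(m-l)}}^*$ cut out by $\delta^{sk}=1$ is exactly the one annihilated by the exponent $(q^{2l+1}-1)\frac{q^{2e}-1}{q^{2(m+l+1)}-1}\frac ne$; for (c) the subtlety is that $\theta$ still carries a factor $\upsilon^{e/(m+l+1)}$, which is generally a nontrivial element of $U$, so one must first raise to the power $(q+1)/\gcd(e/(m+l+1),q+1)$ to eliminate it -- this is precisely why $s$ contains the factor $(q+1)/\gcd((m-l)/d,q+1)$. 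Both verifications rest on the identity
\[\gcd\Big((q^{2l+1}-1)\tfrac{q^{2e}-1}{q^{2(m+l+1)}-1}\tfrac{q+1}{\gcd(e/(m+l+1),q+1)},\,q^{2(m-l)}-1\Big)=s,\]
whose proof uses $\frac{q^{2e}-1}{q^{2(m+l+1)}-1}\equiv\frac{q^{2(m-l)}-1}{q^{2d}-1}\pmod{q^{2(m-l)}-1}$ and $\gcd(q^{2l+1}-1,q^{2d}-1)=q^d-1$ (valid because $d\mid 2l+1$, so $d$ is odd). Until this computation is carried out, the statement that $\delta^{sk}=1$ and ``$\delta^s\ne1$ or $\gcd(k,q)\ne1$'' are ``designed to meet'' the requirements is a restatement of the goal, not a proof; with it, your argument closes and reproduces the paper's conclusion via Theorem \ref{bound}.
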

\begin{proof}
It suffices to show that $\dim\ker(\upsilon L^*+L)=2m+1$ for every $\upsilon\in U$. Let $\alpha=-\gamma^{q^{2(m+l+1)}-1}\delta^{q^{2l+1}-1}$, so that
\[\begin{split}\alpha L(x)&=-\gamma^{q^{2(m+l+1)}+q^{2m+1}}\delta^{q^{2l+1}}x^{q^{2m}}+\gamma^{q^{2(m+l+1)}+q^{2l+1}}\delta^{q^{2l+1}}x^{q^{2l}}\\&=-\big(\gamma^{q^{2(l+1)}+q}\delta^qx\big)^{q^{2m}}+\big(\gamma^{q^{2(m+1)}+q}\delta^qx\big)^{q^{2l}},\end{split}\]
while
\[L^*(x)=(\gamma^{q^{2(m+1)}+q}\delta^qx\big)^{q^{2(n-m-1)}}-\big(\gamma^{q^{2(l+1)}+q}\delta^qx\big)^{q^{2(n-l-1)}}.\]
Then $\upsilon L^*(x)+L(x)=L_0(L(x))$ for $\upsilon\in U$, where $L_0(x)=\upsilon(\alpha x)^{q^{2(n-m-l-1)}}+x$. Moreover, $\dim\ker L=m-l$ and $\im L=\gamma\ker\mathrm{Tr}_{m-l}$, where $\mathrm{Tr}_{m-l}$ denotes the trace map of $\mathbb F_{q^{2n}}/\mathbb F_{q^{2(m-l)}}$, since
\[L(x)=\gamma\delta\big((\gamma^q x)^{q^{2m}}-(\gamma^qx)^{q^{2l}}\big).\]
Let $e=\lcm(m-l,m+l+1)$ and $\theta=\big(\upsilon\delta^{q^{2l+1}-1}\big)^\frac{q^{2e}-1}{q^{2(m+l+1)}-1}$, for which we claim that
\[\theta^\frac ne=1\quad\text{and either}\quad\theta\ne1\quad\text{or}\quad\gcd(n/e,q)\ne1.\]
Then
\[(-\upsilon\alpha)^\frac{q^{2n}-1}{q^{2(m+l+1)}-1}=(\upsilon\delta^{q^{2l+1}-1})^{\frac{q^{2e}-1}{q^{2(m+l+1)}-1}\frac{q^{2n}-1}{q^{2e}-1}}=\theta^\frac ne=1,\]
and thus $\upsilon\alpha=-\beta^{1-q^{2(m+l+1)}}$ for some $\beta\in\mathbb F_{q^{2n}}$. It follows that
\[L_0(x)=-\beta^{q^{2(n-m-l-1)}-1}x^{q^{2(n-m-l-1)}}+x=\beta^{-1}\big(\beta x-(\beta x)^{q^{2(n-m-l-1)}}\big),\]
with $\ker L_0=\beta^{-1}\mathbb F_{q^{2(m+l+1)}}$. Note that $(\beta\gamma)^{1-q^{2(m+l+1)}}=\upsilon\delta^{q^{2l+1}-1}$ and, for the trace map $\mathrm{Tr}_e$ of $\mathbb F_{q^{2n}}/\mathbb F_{q^{2e}}$,
\[\begin{split}\beta\gamma\mathrm{Tr}_e((\beta\gamma)^{-1})=\sum_{i=0}^{\frac ne-1}(\beta\gamma)^{{1-q^{2(m+l+1)}}\frac{q^{2e}-1}{q^{2(m+l+1)}-1}\frac{q^{2ei}-1}{q^{2e}-1}}=\sum_{i=0}^{\frac ne-1}\theta^i=0\end{split}\]
by the above claim. Now $\mathrm{Tr}_{m-l}((\beta\gamma)^{-1}x)$ vanishes on $\mathbb F_{q^{2(m+l+1)}}$ since $\mathrm{Tr}_e((\beta\gamma)^{-1}u)=\mathrm{Tr}_e((\beta\gamma)^{-1})u=0$ for all $u\in\mathbb F_{q^{2(m+l+1)}}$. This implies
\[\ker L_0=\beta^{-1}\mathbb F_{q^{2(m+l+1)}}\subseteq\gamma\ker\mathrm{Tr}_{m-l}=\im L,\]
so
\[\dim\ker(\upsilon L^*+L)=\dim\ker L+\dim(\ker L_0\cap\im L)=m-l+m+l+1=2m+1\]
for arbitrary $\upsilon\in U$.

It remains to prove the claim on $\theta$. For $d=\gcd(m-l,m+l+1)$, we have
\[\frac ne=k\frac{\lcm(m-l,(m+l+1)(q+1))}{\lcm(m-l,m+l+1)}=k\frac{q+1}{\gcd((m-l)/d,q+1)},\]
with $\gcd(n/e,q)=\gcd(k,q)$, and
\[\frac{q^{2e}-1}{q^{2(m+l+1)}-1}=\sum_{i=0}^{\frac e{m+l+1}-1}q^{2(m+l+1)i}\equiv\sum_{i=0}^{\frac{m-l}d-1}q^{2di}\equiv\frac{q^{2(m-l)}-1}{q^{2d}-1}\pmod{q^{2(m-l)}-1},\]
because $m+l+1$ and $d$ generate the same subgroup of order $\frac{m-l}d$ in $\mathbb Z/(m-l)\mathbb Z$. Then
\[\gcd\Big(\frac{q^{2e}-1}{q^{2(m+l+1)}-1},q^{2(m-l)}-1\Big)=\frac{q^{2(m-l)}-1}{q^{2d}-1},\]
and
\[\gcd\Big((q^{2l+1}-1)\frac{q^{2e}-1}{q^{2(m+l+1)}-1},q^{2(m-l)}-1\Big)=\frac{q^{2(m-l)}-1}{q^{2d}-1}\gcd(q^{2l+1}-1,q^{2d}-1),\]
where $\gcd(q^{2l+1}-1,q^{2d}-1)=q^d-1$ as $\gcd(2l+1,2d)=\gcd(2l+1,d)=d$. A similar argument gives
\[\begin{split}&\mathrel{\phantom{=}}\gcd\Big((q^{2l+1}-1)\frac{q^{2e}-1}{q^{2(m+l+1)}-1}\frac{q+1}{\gcd(e/(m+l+1),q+1)},q^{2(m-l)}-1\Big)\\&=\frac{q^{2(m-l)}-1}{q^d+1}\gcd\Big(\frac{q+1}{\gcd((m-l)/d,q+1)},q^d+1\Big)\\&=s,\end{split}\]
and
\[\begin{split}&\mathrel{\phantom{=}}\gcd\Big((q^{2l+1}-1)\frac{q^{2e}-1}{q^{2(m+l+1)}-1}\frac ne,q^{2(m-l)}-1\Big)\\&=\frac{q^{2(m-l)}-1}{q^d+1}\gcd(n/e,q^d+1)\\&=\gcd(sk,q^{2(m-l)}-1),\end{split}\]
Therefore,
\[\theta^\frac ne=\big(\upsilon\delta^{q^{2l+1}-1}\big)^{\frac{q^{2e}-1}{q^{2(m+l+1)}-1}\frac ne}=\upsilon^\frac n{m+l+1}\delta^{(q^{2l+1}-1)\frac{q^{2e}-1}{q^{2(m+l+1)}-1}\frac ne}=1,\]
since $q+1$ divides $\frac n{m+l+1}$ and $\delta^{sk}=1$. If $\delta^s\ne1$, then $\theta\ne1$; otherwise,
\[\delta^{(q^{2l+1}-1)\frac{q^{2e}-1}{q^{2(m+l+1)}-1}\frac{q+1}{\gcd(e/(m+l+1),q+1)}}=\big(\upsilon^{-\frac e{m+l+1}}\big)^\frac{q+1}{\gcd(e/(m+l+1),q+1)}=1,\]
a contradiction. The proof is now complete.
\end{proof}

Finally, we have completely determined the number of zeros of $\mathrm{Tr}(xL(x^q))$ in $\mathbb F_{q^{2n}}$ when $L$ is a monomial, and derived some maximal or minimal Artin-Schreier curves defined by binomials.

\bibliographystyle{abbrv}
\bibliography{references}

\begin{thebibliography}{10}

\bibitem{anbar2014quadratic}
N.~Anbar and W.~Meidl.
\newblock Quadratic functions and maximal Artin--Schreier curves.
\newblock {\em Finite Fields and Their Applications}, 30:49--71, 2014.

\bibitem{bartoli2021explicit}
D.~Bartoli, L.~Quoos, Z.~Sayg{\i}, and E.~S. Y{\i}lmaz.
\newblock Explicit maximal and minimal curves of Artin--Schreier type from
  quadratic forms.
\newblock {\em Applicable Algebra in Engineering, Communication and Computing},
  32(4):507--520, 2021.

\bibitem{coulter2002number}
R.~S. Coulter.
\newblock The number of rational points of a class of Artin--Schreier curves.
\newblock {\em Finite Fields and Their Applications}, 8(4):397--413, 2002.

\bibitem{fitzgerald2017invariants}
R.~W. Fitzgerald and Y.~Kottegoda.
\newblock Invariants of trace forms in odd characteristic and authentication
  codes.
\newblock {\em Finite Fields and Their Applications}, 48:1--9, 2017.

\bibitem{klapper1997cross}
A.~Klapper.
\newblock Cross-correlations of quadratic form sequences in odd characteristic.
\newblock {\em Designs, Codes and Cryptography}, 11(3):289--305, 1997.

\bibitem{lidl1997}
R.~Lidl and H.~Niederreiter.
\newblock {\em Finite fields}.
\newblock Cambridge University Press, 1997.

\bibitem{oliveira2023artin}
D.~Oliveira and F.~B. Mart{\'\i}nez.
\newblock Artin-Schreier curves given by $\mathbb F_q$-linearized polynomials.
\newblock {\em Discrete Mathematics}, 346(12):113630, 2023.

\bibitem{ozbudak2016explicit}
F.~{\"O}zbudak and Z.~Sayg{\i}.
\newblock Explicit maximal and minimal curves over finite fields of odd
  characteristics.
\newblock {\em Finite Fields and Their Applications}, 42:81--92, 2016.

\bibitem{stichtenoth2009}
H.~Stichtenoth.
\newblock {\em Algebraic function fields and codes}.
\newblock Springer Science \& Business Media, 2009.

\bibitem{wall1963conjugacy}
G.~Wall.
\newblock On the conjugacy classes in the unitary, symplectic and orthogonal
  groups.
\newblock {\em Journal of the Australian Mathematical Society}, 3(1):1--62,
  1963.

\end{thebibliography}

\end{document}